\documentclass[11pt]{amsart}
\usepackage{amsfonts}
\usepackage{amsmath,amssymb,amsthm,latexsym,graphicx,graphics}
\usepackage{multicol}
\usepackage{color}
\usepackage{url,hyperref}
\usepackage{euscript}
\usepackage{wrapfig}
\usepackage{caption}
\linespread{1.2}


\hyphenation{ge-ne-ral pro-duct e-ve-ry}

\def \R{\mathbb{R}}

\newtheorem{theorem}{Theorem}[section]
\newtheorem{cor}[theorem]{Corollary}

\newtheorem{lema}[theorem]{Lemma}
\newtheorem{rem}[theorem]{Remark}
\newtheorem{remark}[theorem]{Remark}

\newtheorem{prop}[theorem]{Proposition}
\newtheorem{thm}{Theorem}

\usepackage{url,hyperref}
\listfiles

\hypersetup{
    bookmarks=true,         
    unicode=false,          
    pdftoolbar=true,        
    pdfmenubar=true,        
    pdffitwindow=false,     
    pdfstartview={FitH},    
    pdftitle={My title},    
    pdfauthor={Author},     
    pdfsubject={Subject},   
    pdfcreator={Creator},   
    pdfproducer={Producer}, 
    pdfkeywords={keyword1} {key2} {key3}, 
    pdfnewwindow=true,      
    colorlinks=true,       
    linkcolor=black,          
    citecolor=green,        
    filecolor=magenta,      
    urlcolor=blue         
}

\usepackage{euscript,color}

\definecolor{red}{rgb}{1,0,0}

\hypersetup{
    bookmarks=false,         
    unicode=false,          
    pdftoolbar=true,        
    pdfmenubar=true,        
    pdffitwindow=false,     
    pdfstartview={FitH},    
    pdftitle={My title},    
    pdfauthor={Author},     
    pdfsubject={Subject},   
    pdfcreator={Creator},   
    pdfproducer={Producer}, 
    pdfkeywords={keyword1} {key2} {key3}, 
    pdfnewwindow=true,      
    colorlinks=true,       
    linkcolor=black,          
    citecolor=green,        
    filecolor=magenta,      
    urlcolor=red         
}

\def\R{\mathbb{R}}

\def\Hol{\mathrm{Hol}}
\def\meti{\left\langle}
\def\metd{\right\rangle}

\def\crn{\mathcal{R}^{\bot}}

\textwidth 15cm
\textheight 22cm
\oddsidemargin .2in
\evensidemargin .2in

\begin{document}

\title[The normal holonomy of $CR$-submanifolds]
{The normal holonomy of $CR$-submanifolds}

\author[A. J. Di Scala]{Antonio J. Di Scala}
\author[F. Vittone]{Francisco Vittone}

\subjclass[2000]{Primary 53B15, 53B25}
\keywords{normal holonomy, $CR$-submanifolds, normal connection, s-representations.}

\date{\today}
\begin{abstract}
We study the normal holonomy group, i.e. the holonomy group of the normal connection, of a $CR$-submanifold of a complex space form.
We show that the normal holonomy group of a coisotropic submanifold acts as the holonomy representation of a Riemannian symmetric space.
In case of a totally real submanifold we give two results about reduction of codimension. We describe explicitly the action of the normal holonomy in the case in which the totally real submanifold is contained in a totally real totally geodesic submanifold. In such a case we prove the compactness of the normal holonomy group.
\end{abstract}

\maketitle

\section{Introduction}
The objective of this paper is to study the normal holonomy group of $CR$-submanifolds of complex space forms.

For submanifolds of $\R^n$ or more generally of real space forms,
a fundamental result is the \textit{Normal Holonomy Theorem}\cite{Ol90}. It asserts roughly that the non-trivial component of the action of the restricted normal holonomy group acts on any normal space as the isotropy representation of a Riemannian symmetric space (called $s$-representation for short). The Normal Holonomy Theorem is a very important tool for the study of submanifold geometry, especially in the context of  submanifolds with ``simple extrinsic geometric invariants'', like isoparametric and homogeneous submanifolds (see \cite{BCO} for an introduction to this subject).
Moreover, the Normal Holonomy Theorem has reveled to have important consecuences in the study of intrinsic riemannian geometry,  as it can be seen, for example, in the very important role it plays in the geometric proof of Berger's Theorem \cite{O5}. So a natural question is whether it can be generalized to submanifolds of other ambient spaces, and in particular to submanifolds of complex space forms.

The main tool in the proof of Olmos's theorem is the simplicity of the Ricci equation in real space forms.
Therefore $CR$-submanifolds constitute the natural family of submanifolds to explore the validity of the Normal Holonomy Theorem, and they include some important types of submanifolds such as complex, isotropic (also called totally real or anti-invariant), coisotropic (also called CR-generic) and Lagrangian. (see section \ref{hopf}).

$CR$-submanifolds have been widely studied, see for example \cite{Be1, B2, ChenCR, DO, YK}. Isotropic or totally real submanifolds are those on which the complex structure $J$ maps the tangent space into the normal space at each point. In contraposition, coisotropic submanifolds are those on which $J$ maps the normal space into the tangent space. Lagrangian submanifolds are those which are at the same time isotropic and coisotropic.

\vspace{0.5cm}
%
%
%

Our main result shows that the Normal Holonomy Theorem holds for coisotropic submanfolds of complex space forms.

\begin{thm}\label{teoremagen}
Let $M$ be a coisotropic submanifold of a complex space form $\mathbb{S}_{c}$. Then the restricted normal holonomy group of $M$ acts on the normal space as the holonomy representation of a Riemannian symmetric space i.e. a flat factor plus a s-representation.
\end{thm}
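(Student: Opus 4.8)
The plan is to reduce the statement to the classical Normal Holonomy Theorem in a space form by passing to the total space of the Hopf fibration over $\mathbb{S}_c$. When $c=0$ the space form $\mathbb{S}_0=\mathbb{C}^n=\R^{2n}$ is flat and the assertion is exactly Olmos's theorem \cite{Ol90}, so I would assume $c\neq 0$. For $c>0$ I would use the Hopf fibration $\pi\colon S^{2n+1}\to\mathbb{CP}^n=\mathbb{S}_c$, where $S^{2n+1}\subset\mathbb{C}^{n+1}$ carries the round metric; for $c<0$ the corresponding tool is the semi-Riemannian submersion $\pi\colon\mathbf{H}^{2n+1}_1\to\mathbb{CH}^n=\mathbb{S}_c$ from the anti-de Sitter space $\mathbf{H}^{2n+1}_1=\{z\in\mathbb{C}^{n+1}_1:\langle z,z\rangle=-1\}$. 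In either case the fibres are the orbits of the $S^1$-action $z\mapsto e^{it}z$, the vertical vector field is $V_z=iz$, and on horizontal vectors multiplication by $i$ projects to the complex structure $J$ of $\mathbb{S}_c$. I would set $\widehat M=\pi^{-1}(M)$, an $S^1$-invariant submanifold of the (semi-)Riemannian space form, and denote by a hat the horizontal lifts.

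The next step is to identify the two normal data. Since $\widehat M$ is $S^1$-invariant, $V$ is tangent to $\widehat M$, and because $\pi$ is a (semi-)Riemannian submersion the normal space $\nu_{\widehat p}\widehat M$ is exactly the horizontal lift of $\nu_p M$; in particular $\pi_*$ is an isometry $\nu_{\widehat p}\widehat M\to\nu_p M$ and, for $c<0$, the normal bundle of $\widehat M$ is positive definite even though the ambient metric is Lorentzian (its only negative direction, $V$, is tangent). I would then compare the normal connections. For a horizontal tangent direction $\widehat X$ and a horizontal normal field $\widehat\xi$, O'Neill's formula gives $\overline{\nabla}_{\widehat X}\widehat\xi=\widehat{\overline{\nabla}_X\xi}+(\text{vertical})$, where the ambient connection downstairs splits by Weingarten as a horizontal tangent part $-\widehat{A_\xi X}$ plus the horizontal normal part $\widehat{\nabla^\perp_X\xi}$; since both the vertical correction and $\widehat{A_\xi X}$ are tangent to $\widehat M$, projecting onto $\nu\widehat M$ yields $\widehat\nabla^\perp_{\widehat X}\widehat\xi=\widehat{\nabla^\perp_X\xi}$. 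Thus along horizontal directions the normal connection of $\widehat M$ is the lift of that of $M$.

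The crucial and only remaining point, where coisotropy enters, is the behaviour along the fibre. Parametrising a fibre by $\widehat p(t)=e^{it}\widehat p_0$ and using the equivariance of horizontal lifts, the normal field $\widehat\xi$ lifting a fixed $\xi\in\nu_pM$ satisfies $\widehat\xi(t)=e^{it}\widehat\xi_0$, so that $\overline{\nabla}_V\widehat\xi=i\,\widehat\xi(t)=\widehat{J\xi}$. Here the hypothesis that $M$ is coisotropic, $J\nu M\subseteq TM$, forces $\widehat{J\xi}$ to be tangent to $\widehat M$, whence $\widehat\nabla^\perp_V\widehat\xi=(\overline{\nabla}_V\widehat\xi)^{\nu\widehat M}=0$: the horizontal lift of a normal vector is $\widehat\nabla^\perp$-parallel along the fibre. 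Combining this with the previous paragraph, every loop in $M$ lifts to a path in $\widehat M$ whose normal parallel transport equals $\nabla^\perp$ along the loop, and closing it up along the fibre costs nothing; hence $\pi_*$ conjugates the restricted normal holonomy group of $\widehat M$ at $\widehat p_0$ onto that of $M$ at $p_0$.

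Finally I would invoke the Normal Holonomy Theorem for $\widehat M\subset S^{2n+1}$ (resp. $\widehat M\subset\mathbf{H}^{2n+1}_1$, in the version valid for a nondegenerate, here definite, normal bundle): its restricted normal holonomy acts on $\nu_{\widehat p_0}\widehat M$ as a flat factor plus an $s$-representation. Transporting this via the isometry $\pi_*$ gives the same description for $M$, as claimed. I expect the main obstacle to be the case $c<0$: one must set up the anti-de Sitter fibration carefully and check that the Ricci equation there is still ``simple'' on the definite normal bundle, so that Olmos's argument applies verbatim; the Riemannian case $c>0$ and the flat case $c=0$ are then immediate from the identification above.
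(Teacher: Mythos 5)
Your overall route coincides with the paper's: pull $M$ back to $\hat M=\pi^{-1}(M)$ in $S^{2n+1}$ (resp.\ the anti-de Sitter space), show that coisotropy makes horizontal lifts of normal vectors $\hat\nabla^{\perp}$-parallel along the fibres, identify the two normal holonomy actions via $d\pi$, and quote Olmos. For $c>0$ this is essentially complete. For $c<0$, however, the step you defer --- ``invoke the Normal Holonomy Theorem for $\hat M\subset H^{n+1}_1$ in the version valid for a nondegenerate, here definite, normal bundle'' --- is a genuine gap, because no such version exists: the paper notes explicitly that Olmos's theorem fails for an arbitrary Lorentzian submanifold of the anti-de Sitter space, even though the normal bundle of such a submanifold is automatically positive definite. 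The obstruction sits in the tangent bundle of $\hat M$, not in its normal bundle. Olmos's proof rests on the adapted curvature tensor with $\langle\mathcal{R}^{\perp}(\xi,\zeta)\zeta,\xi\rangle=\frac{1}{2}\,\mathrm{tr}\bigl([\hat A_{\hat\xi},\hat A_{\hat\zeta}]^2\bigr)$ having nonpositive ``sectional curvature''; when the trace is taken over a Lorentzian tangent space the contribution $-\frac{1}{2}\langle[\hat A_{\hat\xi},\hat A_{\hat\zeta}]^2 J\eta,J\eta\rangle$ of the timelike direction $J\eta$ has the wrong sign. What rescues the argument is coisotropy once more: the identity $A_{\xi}J\zeta=A_{\zeta}J\xi$ (a Yano--Kon type identity) is equivalent to $[\hat A_{\hat\xi},\hat A_{\hat\zeta}]J\eta=0$, so the offending term vanishes, the vectors $[\hat A_{\hat\xi},\hat A_{\hat\zeta}]e_i$ are horizontal and hence spacelike, and the sectional curvature is $\leq 0$. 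One must prove this lemma and then rerun Olmos's proof; it does not apply verbatim.

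A second, smaller issue: your fibration argument shows that every loop in $M$, lifted horizontally and closed up along the fibre, realizes the corresponding normal parallel transport on $\hat M$. This yields an inclusion of $\Hol^0_p(M,\nabla^{\perp})$ into $\Hol^0_{\hat p}(\hat M,\hat\nabla^{\perp})$, but a connected subgroup of a group acting as a flat factor plus an $s$-representation need not itself act that way, so you also need the reverse inclusion. For that one must show that an arbitrary loop in $\hat M$ has the same normal parallel transport as a horizontal curve followed by a vertical arc; the paper deduces this from $\hat R^{\perp}_{J\eta,\hat X}=0$ (again a consequence of coisotropy, equivalent via the Ricci equation to $[\hat A_{\hat\xi},\hat A_{\hat\zeta}]J\eta=0$) combined with a decomposition result from the appendix of Olmos's paper on isoparametric submanifolds. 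With these two points supplied, your argument matches the paper's proof.
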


For Lagrangian submanifolds the complex structure $J$ of the ambient space form induces a natural isomorphism between the normal and the Riemannian holonomy groups. Therefore we obtain the following important consequence.

\begin{cor}\label{cor:RicciFlat} A Ricci flat Lagrangian submanifold of a complex space form $\mathbb{S}_{c}$ has non-exceptional Levi-Civita holonomy, i.e., it is either flat or the restricted holonomy group of its Levi-Civita connection is $SO(TM)$.
\end{cor}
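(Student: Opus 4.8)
The plan is to transport the conclusion of Theorem~\ref{teoremagen} from the normal bundle to the tangent bundle by means of the complex structure $J$, and then to combine the resulting constraint on the Levi-Civita holonomy with Ricci-flatness and Berger's classification.

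First I would make precise the isomorphism alluded to just before the statement. Since $M$ is Lagrangian, $J$ maps $T_pM$ onto $T_p^\perp M$ at every point, and since the ambient space form is K\"ahler ($\bar\nabla J=0$), for tangent fields $X,Y$ one has $\bar\nabla_X(JY)=J\bar\nabla_X Y=J(\nabla_XY+\alpha(X,Y))$. Comparing normal components and using that $J$ interchanges $TM$ and $T^\perp M$ (so that $J\alpha(X,Y)$ is tangent while $J\nabla_XY$ is normal) yields $\nabla^\perp_X(JY)=J\nabla_XY$. Thus $J\colon (TM,\nabla)\to(T^\perp M,\nabla^\perp)$ is a parallel bundle isometry, parallel transport commutes with $J$, and therefore the restricted Levi-Civita holonomy representation of $M$ is conjugate by $J$ to its restricted normal holonomy representation.

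Next, because a Lagrangian submanifold is in particular coisotropic, Theorem~\ref{teoremagen} applies and shows that the restricted normal holonomy acts as a flat factor plus an $s$-representation. By the previous step the \emph{Levi-Civita} holonomy representation of $M$ is then also a flat factor plus an $s$-representation. I would now pass to the local de~Rham decomposition $M=\R^k\times M_1\times\cdots\times M_r$: the flat Euclidean factor absorbs the flat part, each $M_i$ is irreducible and non-flat, its holonomy is one of the irreducible $s$-representations of the product, and, since the Ricci tensor splits along the decomposition, each $M_i$ is again Ricci-flat.

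Finally I would analyze a single irreducible non-flat Ricci-flat factor $M_i$ whose holonomy is an irreducible $s$-representation. Such a factor cannot be locally symmetric, for an irreducible Ricci-flat symmetric space is flat. Berger's theorem then confines its holonomy group to the list $SO(n_i),\,U(m_i),\,SU(m_i),\,Sp(m_i),\,Sp(m_i)\!\cdot\!Sp(1),\,G_2,\,Spin(7)$, and among these exactly $SO(n_i)$, $U(m_i)$ and $Sp(m_i)\!\cdot\!Sp(1)$ are $s$-representations. The hypothesis of Ricci-flatness eliminates the last two: a Ricci-flat K\"ahler factor has holonomy inside $SU(m_i)\subsetneq U(m_i)$, and a Ricci-flat factor with holonomy $Sp(m_i)\!\cdot\!Sp(1)$ with $m_i\ge 2$ is necessarily locally hyperk\"ahler with holonomy inside $Sp(m_i)\subsetneq Sp(m_i)\!\cdot\!Sp(1)$, in either case contradicting that the holonomy equals the full $s$-representation. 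Hence each irreducible factor has holonomy $SO(n_i)$, no exceptional holonomy occurs, and $M$ is either flat or carries the non-exceptional holonomy $SO(TM)$. The delicate point, and the step I expect to require the most care, is this last classification: one must verify precisely which Berger groups are $s$-representations and check that Ricci-flatness is genuinely incompatible with the two borderline cases $U(m)$ and $Sp(m)\!\cdot\!Sp(1)$ rather than merely reducing them to a smaller non-$s$-representation.
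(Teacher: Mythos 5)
Your proposal is correct and follows essentially the same route as the paper: use the K\"ahler identity $\bar\nabla J=0$ to show $J$ intertwines $\nabla^{\perp}$- and $\nabla$-parallel transport, apply Theorem \ref{teoremagen} (Lagrangian $\Rightarrow$ coisotropic) to transfer the s-representation conclusion to the Levi-Civita holonomy, and then invoke Ricci-flatness to force the $SO(n)$/flat dichotomy. The only difference is that the paper asserts this last step in one line, whereas you spell out the de Rham splitting and the Berger-list analysis (ruling out $U(m)$ and $Sp(m)\!\cdot\!Sp(1)$ via Ricci-flatness), which is exactly the argument left implicit there.
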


Non-full totally real submanifolds and Lagrangian submanifolds play also an important role on the study of extrinsically symmetric submanifolds. In \cite{Na}, it was shown that extrinsically symmetric submanifolds of complex space forms are complex submanifolds, totally real submanifolds contained in a totally real totally geodesic submanifold, or lagrangian submanifolds of a totally geodesic complex submanifold.

In Section \ref{Section:TotallyReal} we explore the normal holonomy group of totally real submanifolds of complex space forms. We start with an example showing that the strategy we used for coisotropic submanifolds can not be adapted to this case. In particular, we characterize the so called holomorphic circles \cite[page 8, Definition]{AMU} (also called K\"ahler-Frenet curves \cite[Introduction]{MT}) as those curves of the complex projective space whose pull-back to the sphere via the Hopf fibration has flat normal bundle.
We give two results about reduction of codimension. For totally real submanifolds of totally geodesic totally real submanifolds of a complex space form we give an explicit description of the action of its normal holonomy group.
It turns out that the normal holonomy group is compact but it does not act, in general, as in Olmos' holonomy theorem.\\

We end the paper with an observation missed in \cite{AD} about the restricted normal holonomy of a complex submanifold.
\begin{thm}\label{teo:normalComplejo} Let $M$ be a full (non necessarily complete) complex submanifold of a complex space form and let $\Hol^0_p(M,\nabla^{\perp})$ be the restricted holonomy group of the normal connection. Then $\Hol^0_p(M,\nabla^{\perp})$ acts on the normal space $\nu_p(M)$ as the isotropy representation of a (non necessarily irreducible) Hermitian symmetric space without flat factor.
\end{thm}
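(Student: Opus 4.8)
The plan is to begin from the normal holonomy theorem for Kähler submanifolds proved in \cite{AD} and to strengthen its conclusion in two independent ways: promoting the Riemannian symmetric space to a Hermitian one, and eliminating the flat factor using fullness. As a preliminary remark, since $M$ is a complex submanifold of a Kähler manifold the ambient complex structure $J$ is parallel and preserves both $TM$ and $\nu(M)$; hence $\nabla^\perp J=0$, so $J$ commutes with $\nabla^\perp$-parallel transport and $\Hol^0_p(M,\nabla^\perp)\subseteq U(\nu_p(M))$, with $J$ lying in the commutant of the holonomy representation. By \cite{AD} this representation is the holonomy representation of a Riemannian symmetric space, so I may fix an orthogonal decomposition $\nu_p(M)=V_0\oplus V_1\oplus\cdots\oplus V_k$ in which $V_0$ is the subspace fixed by the holonomy and each $V_i$ ($i\ge1$) carries an irreducible $s$-representation.

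For the Hermitian statement I would use that $J$, being in the commutant, permutes the irreducible factors $V_i$ and preserves their isomorphism types. If $J(V_i)=V_i$, then $J|_{V_i}$ is a holonomy-invariant complex structure on an irreducible $s$-representation, which forces that representation to be of complex type, i.e. the isotropy representation of an irreducible Hermitian symmetric space. If instead $J$ interchanges two (necessarily isomorphic, real-type) factors $V_i$ and $V_j$, then $V_i\oplus V_j$ with the complex structure $J$ is the complexification of $V_i$, which is again the isotropy representation of a Hermitian symmetric space, a central $SO(2)$ providing the complex structure; that $J$ agrees with this standard structure follows because the commutant is $M_2(\R)$ and any two invariant complex structures there are conjugate by a holonomy-equivariant map. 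Summing over $i$, the non-fixed part of the representation is the isotropy representation of a (possibly reducible) Hermitian symmetric space.

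It remains to show $V_0=0$, which is where fullness enters. Suppose $0\ne\xi\in V_0$; extending $\xi$ to a (locally) $\nabla^\perp$-parallel normal field, the field $J\xi$ is parallel as well because $V_0$ is $J$-invariant, and $R^\perp(\cdot,\cdot)\xi=0$. For a complex submanifold the shape operator satisfies $A_{J\eta}=JA_\eta=-A_\eta J$, so $A_\xi$ is self-adjoint and anticommutes with $J$. Feeding $R^\perp(X,Y)\xi=0$ into the Ricci equation with $\eta=J\xi$, together with the ambient curvature identity $\bar R(X,Y)\xi=-2c\,\langle JX,Y\rangle\,J\xi$ of the complex space form $\mathbb S_c$ (valid for $X,Y$ tangent and $\xi$ normal, up to the normalization of $c$), I expect to obtain
\[
A_\xi^{\,2}=-c\,|\xi|^2\,\mathrm{Id}.
\]
Self-adjointness of $A_\xi$ forces $-c\,|\xi|^2\ge0$, so when $c>0$ this is impossible and $V_0=0$. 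When $c=0$ the identity gives $A_\xi=0$, whence also $A_{J\xi}=0$, so $\xi$ and $J\xi$ lie in the kernel of the second fundamental form and Erbacher's reduction-of-codimension theorem places $M$ in a proper totally geodesic complex submanifold, contradicting fullness.

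The main obstacle I anticipate is the negative case $c<0$, where $A_\xi$ is invertible with the two constant eigenvalues $\pm|\xi|\sqrt{-c}$ whose eigendistributions are interchanged by $J$. Here the Codazzi equation---whose right-hand side is the tangential part of $\bar R(X,Y)\xi$, which vanishes---shows that $A_\xi$ is a Codazzi tensor with constant eigenvalues, so its eigendistributions are autoparallel and $M$ splits locally as a Riemannian product; this forced reducibility, together with the parallel normal directions $\xi,J\xi$, again yields a reduction of codimension contradicting fullness. Making this last reduction rigorous, and uniform with the cases $c\ge0$, is the delicate point; once $V_0=0$ is established, combining it with the Hermitian analysis above completes the proof.
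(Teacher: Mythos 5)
Your proposal rests on the premise that \cite{AD} already provides, for an arbitrary full complex submanifold, the decomposition of the normal holonomy representation into a trivial summand plus irreducible $s$-representations, so that only the upgrades ``Hermitian'' and ``no flat factor'' remain to be proved. That premise is not available: \cite{AD} establishes the structure theorem only under the additional hypothesis that the normal holonomy acts irreducibly or that the second fundamental form has no nullity, and producing the $s$-representation structure in the remaining case (reducible action \emph{and} non-trivial nullity) is precisely the content of Theorem \ref{teo:normalComplejo}. The paper closes that case by a mechanism absent from your proposal: a unit nullity vector $X\in\mathcal{N}_p$ kills the commutator term in the Ricci equation (\ref{ricci}), giving $R^{\perp}(X,JX)\xi=-\tfrac{c}{2}J\xi$, so for $c\neq 0$ the complex structure $J$ lies in the normal holonomy \emph{algebra} --- not merely in its commutant, which is all your parallelism observation yields --- and then \cite[Theorem 24 and Proposition 9]{AD} deliver the Hermitian $s$-representation without flat factor in one stroke. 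Nothing in your argument substitutes for this step, so the essential case is left open. (Your Hermitian analysis of the non-flat part is fine as far as it goes --- note in fact that $J$ cannot interchange two non-trivial irreducible factors, since each $\Phi_i$ acts trivially off $V_i$, so only your first subcase occurs --- but it presupposes the very decomposition that is in question.)

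Your elimination of the flat factor is correct and rather elegant for $c\geq 0$: with the paper's normalization one gets $A_\xi^2=-\tfrac{c}{4}|\xi|^2\,\mathrm{Id}$ for a parallel $\xi$ with $R^{\perp}(\cdot,\cdot)\xi=0$, which contradicts self-adjointness when $c>0$, and for $c=0$ gives $A_\xi=0$ and hence reduction of codimension as in \cite{Di00}. But the case $c<0$ is genuinely incomplete as you acknowledge: the local de Rham splitting of $M$ into the $\pm\lambda$-eigendistributions of the Codazzi tensor $A_\xi$ does \emph{not} ``yield a reduction of codimension'' --- a product structure on $M$ says nothing about the first normal space. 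The correct finish is intrinsic: for a unit eigenvector $X\in E_{\lambda}$ one has $JX\in E_{-\lambda}$, so the product structure forces $R_{X,JX}=0$, while the Gauss equation (\ref{gauss}) for a complex submanifold gives $\langle R_{X,JX}JX,X\rangle=c-2|\alpha(X,X)|^2<0$, a contradiction. Even with that repaired, the first gap is the one that matters: your route proves only the two upgrades, not the theorem.
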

In \cite{AD} the above result was proved under the additional hypothesis that either the normal holonomy group acts irreducibly or the second fundamental form has no nullity. Theorem \ref{teo:normalComplejo} plays an important role in the local classification of normal holonomies given in \cite{DV15} (see \cite{CDO} for a global result).


\section{Preliminaries and basic facts}
\subsection{Complex space forms and Hopf fibrations}
\label{hopf1}
 Let $\mathbb{S}_{c}$ be a complex space form of holomorphic sectional curvature $c$. For the sake of simplicity we shall assume that $\mathbb{S}_c$ is one of the standard models, that is, the complex euclidean space $\mathbb{C}^n$ if $c=0$, the complex projective space $\mathbb{C}P^{n}$ if $c=4$ or the complex hyperbolic space $\mathbb{C}H^{n}$ if $c=-4$. However all the results here are valid for arbitrary $c$.

  Denote by $J$ the complex structure, by $\meti\; , \;\metd$ the standard metric and by $\overline{\nabla}$ the Levi-Civita connection on $\mathbb{S}_c$.

We will now introduce the Hopf fibrations for the complex hyperbolic and projective spaces.

If $z=(z_0,z_1,\cdots,z_n),\ w=(w_0,w_1,\cdots,w_n)\in\mathbb{C}^{n+1}$, define $$\left\langle z,w \right\rangle=Re\left(\sum_{i=0}^{n}z_i\overline{w}_i\right); \ \ \ \left\langle z,w \right\rangle_{1}=Re\left(-z_0\overline{w}_0+\sum_{i=1}^{n}z_i\overline{w}_i\right)$$
Then $\left\langle \;,\; \right\rangle$ is the standard inner product on $\mathbb{C}^{n+1}$, which can be identified with the Euclidean space $\mathbb{R}^{2n+2}$.

 On the other hand, $\left\langle \;,\; \right\rangle_{1}$ is a scalar product of signature $2$ on $\mathbb{C}^{n+1}$. We will denote by $\mathbb{C}^{n+1}_1$ the complex vector space $\mathbb{C}^{n+1}$ with this scalar product. Then $\mathbb{C}^{n+1}_{1}$ can be identified with the standard semi-Euclidean space $\mathbb{R}^{2n+2}_{2}$.

For $c=4$, denote by $\overline{N}_c$ the $(2n+1)$-dimensional sphere in $\mathbb{C}^{n+1}$, that is $$\overline{N}_{4}=S^{2n+1}=\{z\in\mathbb{C}^{n+1}\,:\,\meti z,z\metd=1\}$$ and for $c=-4$ denote by $\overline{N}_c$  the Lorentzian pseudo-hyperbolic space (or anti-De Sitter space) $H^{n+1}_1$ in $\mathbb{C}^{n+1}_1$, that is,
$$\overline{N}_{-4}=H^{n+1}_1=\{z\in \mathbb{C}^{n+1}\, :\,\meti z,z\metd_{1}=-1\}.$$

Recall that $H^{n+1}_1$ is a Lorentzian real space form of constant sectional curvature $K=-1$ (see \cite[Prop. 29, page 113]{ON}).

The one-parameter group $U(1)=\{z=e^{i\theta}:\theta\in\mathbb{R}\}$ acts by multiplication on $\overline{N}_c$ and $\mathbb{S}_c=\overline{N}_c/U(1)$. Moreover, the standard projection $$\pi_c:\overline{N}_c\to \mathbb{S}_c$$ is a principal fiber bundle, called \textsl{Hopf fibration}.

Let $\eta_{p}:=p$ be the position vector field on $\overline{N}_c$ and let $V_p$ and $H_p$ be the vertical and horizontal subspaces associated to $\pi_c$ at $p$ respectively. That is, $V_p=T_{p}(\pi_c^{-1}(\pi_c(p))$ and $H_p=(V_p)^{\bot}\subset T_p \overline{N}_c$. Then $$V_p=\mathrm{span}_{\mathbb{R}}\{J\eta_p\};\ \ H_p\equiv T_{\pi_c(p)}\mathbb{S}_c.$$
$J\eta$ is called the \textsl{Hopf vector field} of $\overline{N}_c$.
Observe that $H_p$ is a $J$-invariant subspace and $d\pi_c$ identifies $J_{|H_p}$ with the complex structure $J$ of $\mathbb{S}_c$. Moreover, $\pi_4$ is a Riemannian submersion and $\pi_{(-4)}$ is a pseudo Riemannian one, and in both cases $H$ defines a Riemannian subbundle of  $T\overline{N}_c$.

Denote by $\nabla'$ the Levi-Civita connection of $\overline{N}_c$ and by $g_c$ the metric on $\overline{N}_c$ induced from the corresponding inner product on the ambient complex space.

For a vector field $X$ in $\mathbb{S}_c$ we will always denote by $\hat{X}$ its \textbf{horizontal lift} to $\overline{N}_c$, i.e., $\hat{X}$ is the only horizontal vector field in $\overline{N}_c$, $\pi_c$-related to $X$.
Then from O'Neil formulas for a submersion one gets that for each $X,\ Y\in \mathfrak{X}(\mathbb{S}_c)$,
\begin{equation}\nabla'_{\hat{X}}\hat{Y}=\widehat{(\overline{\nabla}_X Y)} + g_c(X,JY)J\eta
\label{lift1}
\end{equation}
\begin{equation}
 \nabla'_{J\eta}\hat{X}=\nabla'_{\hat{X}}J\eta=J\hat{X}=\widehat{JX}.
 \label{lift2}
 \end{equation}
(cf. \cite{NT})

\subsection{$CR$-submanifolds}

\label{hopf}

A submanifold $M$ of $\mathbb{S}_c$ (or more generally, of a K\"ahlerian manifold) is called a \textsl{$CR$-submanifold} if there exists a differentiable distribution $\mathcal{D}$ on $M$ such that for each $x\in M$,  $D_x$ is a complex subspace of $T_x \mathbb{S}_c$, i.e., $J\mathcal{D}_x=\mathcal{D}_x$, and the orthogonal distribution $\mathcal{D}^{\bot}\subset TM$ is anti-invariant, i.e., $J\mathcal{D}^{\bot}_x$ is normal to $M$.

There are three  particular cases of $CR$-submanifolds we are interested in. If $\mathcal{D}_x=T_xM$, then $M$ is a \textsl{complex} submanifold of $\mathbb{S}_c$.

 If on the contrary $\mathcal{D}_x=\{0\}$, i.e. $JT_xM\subset \nu_xM$ for each $x$,  then $M$ is a \textsl{totally real} ( also called anti-invariant or isotropic) submanifold of $\mathbb{S}_c$.

Finally, if $\mathrm{dim}\; \mathcal{D}^{\bot}_x=\mathrm{dim}\; \nu_x M$, and consequently $J\nu_x M\subset T_xM$, $M$ is called a \textsl{coisotropic} ( also called generic $CR$-submanifold) of $\mathbb{S}_c$.

A submanifold which is both totally real and coisotropic, i.e., $JT_xM=\nu_xM$ is called a \textsl{Lagrangian submanifold} of $\mathbb{S}_c$.

For general facts about $CR$-submanifolds of K\"ahler manifolds see for example  \cite{Be1, B2, ChenCR, YK, DO}.
\vspace{0.5cm}

We will now introduce some preliminaries on the general theory of submanifolds of a complex space form and state how the geometry of a submanifold $M$ of the complex projective or hyperbolic space relates with that of its pull-back via the Hopf fibration.

Let $M$ be a Riemannian submanifold of $\mathbb{S}_c$. Denote by $\nabla$ the Levi-Civita connection of $M$ and by $\nabla^{\bot}$ the normal connection on the normal bundle $\nu M=(TM)^{\bot}$. Let $\alpha$ and $A$ be the second fundamental form and shape operator of $M$ respectively. They are defined, taking tangent and normal components with respect to the decomposition $T\mathbb{S}_{c|M}=TM\oplus\nu M$ by the Gauss and Codazzi formulas
\begin{equation}
\overline{\nabla}_X Y=\nabla_X Y+ \alpha(X,Y),\qquad \overline{\nabla}_X\xi=-A_{\xi}X+\nabla^{\bot}_X\xi
\label{gausscodazzi}
\end{equation}
and related by $\meti\alpha(X,Y),\xi\metd=\meti A_{\xi}X,Y\metd$, for any tangent vector fields $X$ and $Y$ to $M$ and any normal vector field $\xi$.

Denote by $\overline{R}^{c}$ the Riemannian curvature tensor of $\mathbb{S}_c$. Recall that if $X,Y\in \mathfrak{X}(\mathbb{S}_c)$ then
\begin{equation}
\overline{R}^{c}_{X,Y}=\frac{1}{4}c(X\wedge Y+JX\wedge JY-2\meti JX,Y\metd J)
\label{curvatura}
\end{equation}
where $X\wedge Y(Z)=\meti Y,Z\metd X-\meti X,Z\metd Y$.

Let $R$ and $R^{\bot}$ be the Riemannian and the normal curvature tensors of $M$ respectively. Then for $X,Y,Z$ tangent to $M$ and $\xi, \zeta$ normal to $M$, the well known equations of Gauss, Codazzi and Ricci hold:

\begin{equation}
\meti \overline{R}^c_{X,Y}Z,W\metd =\meti R_{X,Y}Z,W\metd + \meti \alpha(X,Z),\alpha(Y,W)\metd -\meti \alpha(X,W),\alpha(Y,Z)\metd
\label{gauss}
\end{equation}
\begin{equation}
(\overline{R}^{c}_{X,Y}Z)^{\bot}=(\nabla^{*}_X\alpha)(Y,Z)-(\nabla^{*}_{Y}\alpha)(X,Z)
\label{codazzi}
\end{equation}
\begin{equation}
\meti \overline{R}^{c}_{X,Y}\xi,\zeta\metd=\meti R^{\bot}_{X,Y}\xi,\zeta\metd - \meti [A_{\xi},A_{\zeta}]X,Y\metd.
\label{ricci}
\end{equation}
where $\nabla^{*}$ is the connection $\nabla\oplus \nabla^{\bot}$ on the vector bundle $T\mathbb{S}_{c|M}$.

\vspace{0.5cm}

Assume now that $M\subset \mathbb{S}_c$ with $c=4$ or $c=-4$.

Set $\hat{M}=\pi^{-1}(M)$ and $\hat{\pi}=\pi_{c|\hat{M}}$ where $\pi_c$ is the Hopf fibration introduced in the previous section. Then  $\hat{\pi}:\hat{M}\to M$ is a principal $U(1)$-bundle. If $c>0$, i.e. $M\subset \mathbb{C}P^n$, then  $\hat{M}$ is a Riemannian submanifold of the sphere $\overline{N}_4=S^{2n+1}$ and $\hat{\pi}$ is a Riemannian submersion. If $c<0$, i.e. $M\subset \mathbb{C}H^n$, then $\hat{M}$ is a Lorentzian submanifold of $\overline{N}_{(-4)}=H^{n+1}_1$ and $\hat{\pi}$ a pseudo-Riemannian submersion (observe that for $c=-4$ one has $g_{(-4)}(J\eta,J\eta)=-1$). Along this paper we will call $\hat{M}$ the {\bf pull-back} of $M$.

 The vertical subspace $\hat{V}_p$ of $\hat{\pi}$ coincides with $V_{p}=\mathbb{R}J\eta_p$ and the horizontal subspace $\hat{H}_p=H_p\cap T_p\hat{M}$ is isometric via $d \hat{\pi}_p$ with $T_{\hat{\pi}(p)}M$.

If $X,Y$ are tangent vector fields to $M$ and $\xi$ is a normal vector field to $M$, their horizontal lifts are respectively tangent and normal to $\hat{M}$.

Denote by $\hat{\nabla}$, $\hat{\nabla}^{\bot}$, $\hat{\alpha}$ and $\hat{A}$ the Levi-civita and normal connections respectively and the second fundamental form and shape operator of $\hat{M}$. Then from equations (\ref{lift1}) and (\ref{lift2}) it is not difficult to obtain the following equations (recall that the hat $\hat{\cdot}$ always indicates the horizontal lift of a vector):

\begin{equation}
\hat{\nabla}_{\hat X}\hat{Y}=\widehat{\nabla_X Y}+\meti X, JY\metd J\eta,\qquad \hat{\alpha}(\hat{X},\hat{Y})=\widehat{\alpha(X,Y)}.
\label{lift3}
\end{equation}

\begin{equation}
\hat{A}_{\hat{\xi}}\hat{X}=\widehat{A_{\xi}X}-\meti X,J\xi\metd J\eta,\qquad \hat{\nabla}^{\bot}_{\hat{X}}\hat{\xi}=\widehat{\nabla^{\bot}_{X} \xi}
\label{lift4}
\end{equation}

\begin{equation}
\hat{A}_{\hat{\xi}}J\eta=-(\hat{J\xi})^{\top}\qquad \hat{\nabla}^{\bot}_{J\eta}\hat{\xi}=(\hat{J\xi})^{\bot}
\label{lift5}
\end{equation}
for vector fields $X,\; Y$ tangent to $M$ and a vector field $\xi$ normal to $M$.

\subsection{Normal holonomy}
Given a submanifold $M$ of a (pseudo-)Riemannian manifold $N$, the normal holonomy group is the holonomy group associated to the normal connection $\nabla^{\bot}$ of $M$. Namely, given a piecewise differentiable curve $\gamma:I\to M$ such that $\gamma(0)=p$ and a normal vector $\xi_p\in \nu_p M$, one defines as usual the parallel displacement $\tau^{\bot}_{\gamma}(\xi_p)$ of $\xi_p$ along $\gamma$ with respect to the connection $\nabla^{\bot}$.

Set $\Omega_p(M)$ the set of piecewise differentiable loops of $M$ based at $p$ and by $\Omega^0_p(M)\subset\Omega_p(M)$ the set of null-homotopic piecewise differentiable loops of $M$ based at $p$. Then the normal holonomy group of $M$ at $p$ is defined as $$\Hol_p(M,\nabla^{\bot})=\{\tau^{\bot}_{\gamma}:\nu_p M\to \nu_pM\, :\, \gamma\in\Omega_p(M)\}\subset O(\nu_p M)$$
and the restricted normal holonomy group of $M$ at $p$ is the subgroup of $\Hol_p(M,\nabla^{\bot})$ defined as
$$\Hol_p^{0}(M,\nabla^{\bot})=\{\tau^{\bot}_{\gamma}:\nu_pM\to\nu_pM\, :\, \gamma\in\Omega_p^0(M)\}\subset SO(\nu_p M).$$
$\Hol_p^{0}(M,\nabla^{\bot})$ is the connected component of the identity of $\Hol_p(M,\nabla^{\bot})$.

\section{Coisotropic submanifolds: Proof of Theorem \ref{teoremagen}}

Observe that for the case $c=0$, Theorem \ref{teoremagen} is a direct consequence of the Normal Holonomy Theorem for real space forms \cite{Ol90}. Therefore we will prove it for $c\neq 0$.

\subsection{The strategy:}
The strategy will be the following. Consider a coisotropic submanifold $M$ of $\mathbb{S}_c$ and its pull-back $\hat{M}$ via the Hopf fibration $\pi:\overline{H}_c\to \mathbb{S}_c$. Then for each $p\in \hat{M}$, $d\pi_p$ defines an isometric isomorphism between $\nu_p\hat{M}$ and $\nu_{\pi(p)}M$ and conjugation by $d\pi_p$ defines an isomorphism between $SO(\nu_{\pi(p)}M)$ and $SO(\nu_p\hat{M})$. We will show that for any $p\in \hat{M}$:
\begin{enumerate}
\item The action of $\Hol^0_{\pi(p)}(M,\nabla^{\bot})$ on $\nu_{\pi(p)}M$ identifies, via conjugation with $d\pi_p$, with the action of $\Hol^0_p(\hat{M},\hat{\nabla}^{\bot})$ on $\nu_p\hat{M}$.
\item $\Hol^0_p(\hat{M},\hat{\nabla}^{\bot})$ acts on $\nu_p\hat{M}$ as the holonomy representation of a Riemannian symmetric space.
\end{enumerate}

We start with some technical results.
We will keep the notations introduced in section \ref{hopf}.

Fix some $p$ in $\hat{M}$ and set $x=\hat{\pi}(p)$. Let $\gamma(t)=e^{it}p$ be a vertical curve in $\hat{M}$ such that $\gamma(0)=p$. For $\xi\in \nu_x M$, let $\hat{\xi}(t)$ be the horizontal lift of $\xi$ to $\hat{M}$ at $\gamma(t)$. Then we have:
\begin{lema}
$M$ is a coisotropic submanifold if and only if $\hat{\xi}(t)$ is a $\hat{\nabla}^{\bot}$-parallel vector field along $\gamma(t)=e^{it}p$ for each $p\in \hat{M}$ and each $\xi\in T_{\hat{\pi}(p)}M$.
\label{lemagen}
\end{lema}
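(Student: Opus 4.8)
The plan is to reduce the entire statement to the single structural identity (\ref{lift5}), after which the lemma becomes a purely pointwise condition on the position of $J\xi$ relative to $TM$. First I would record the velocity of the vertical curve: since $\eta$ is the position vector field and $J$ is identified with multiplication by $i$, one has $\gamma'(t)=\frac{d}{dt}(e^{it}p)=i\,e^{it}p=J\eta_{\gamma(t)}$. Thus the covariant derivative of the lifted field $\hat\xi(t)$ along $\gamma$ is exactly the operator $\hat\nabla^{\perp}_{J\eta}$ appearing on the left of the second equation in (\ref{lift5}). Here $\xi\in\nu_xM$ is the fixed normal vector of the setup, and $\hat\xi(t)$ is its horizontal lift at each $\gamma(t)$, so $\hat\xi(t)$ is horizontal at every point of the fibre over $x$.

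Next I would apply (\ref{lift5}) directly. Since $\hat\xi$ is the horizontal lift of the normal vector $\xi$, the second formula of (\ref{lift5}) gives
$$\hat\nabla^{\perp}_{\gamma'(t)}\hat\xi=\hat\nabla^{\perp}_{J\eta}\hat\xi=(\widehat{J\xi})^{\perp}.$$
Hence $\hat\xi(t)$ is $\hat\nabla^{\perp}$-parallel along $\gamma$ if and only if $(\widehat{J\xi})^{\perp}=0$, i.e. if and only if the horizontal lift of $J\xi$ has vanishing $\hat M$-normal component.

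The remaining point is to translate $(\widehat{J\xi})^{\perp}=0$ back into the geometry of $M$. For this I would use the fact, recorded in Section \ref{hopf}, that the horizontal lift of a vector tangent to $M$ is tangent to $\hat M$ while the horizontal lift of a vector normal to $M$ is normal to $\hat M$. Decomposing $J\xi=(J\xi)^{\top}+(J\xi)^{\perp}$ with respect to $TM\oplus\nu M$ and lifting, the tangential part lifts to a vector tangent to $\hat M$ and the normal part to a vector normal to $\hat M$; consequently $(\widehat{J\xi})^{\perp}=\widehat{(J\xi)^{\perp}}$, which vanishes exactly when $J\xi\in T_xM$. Letting $\xi$ range over all of $\nu_xM$ and $p$ over all of $\hat M$, the parallelism condition becomes $J\nu_xM\subset T_xM$ for every $x$, which is precisely the defining condition of a coisotropic submanifold given in Section \ref{hopf}. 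This yields both implications simultaneously.

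I do not expect a genuine obstacle here: all the content is packaged in (\ref{lift5}), and the only care required is the elementary bookkeeping that the $\hat M$-normal part of $\widehat{J\xi}$ coincides with the lift of the $M$-normal part of $J\xi$, so that the vertical direction $J\eta$ (which is tangent to $\hat M$) contributes nothing, together with the computation $\gamma'=J\eta$.
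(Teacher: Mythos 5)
Your argument is correct and follows essentially the same route as the paper: the paper simply re-derives the content of equation (\ref{lift5}) inline, computing $\nabla'_{\gamma'(t)}\hat\xi=\frac{d}{dt}\bigl(e^{it}\hat\xi_p\bigr)=J\hat\xi(t)$ and observing that this is the horizontal lift of $J\xi$, whereas you cite (\ref{lift5}) directly; the final reduction to $J\nu_xM\subset T_xM$ is identical. (The statement's ``$\xi\in T_{\hat\pi(p)}M$'' is a typo for $\xi\in\nu_{\hat\pi(p)}M$, and you have read it correctly.)
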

\begin{proof}
Fix $p$ in $M$ and let $\xi_p\in \nu_p\hat{M}$. Set $\xi:=d\hat{\pi}_p(\xi_p)$ and let $\hat{\xi}(t)$ be the normal vector field along $\gamma(t)=e^{it}p$ defined above. Observe first that $\hat{\xi}(t)=e^{it}\cdot\hat{\xi}_p$ (identifying each tangent space of $\overline{N}_c$ with a subspace of the ambient space).
So $$\nabla'_{\gamma'(t)}\hat{\xi}=\frac{d}{dt}_{|t}\left(e^{it}\cdot \hat{\xi}_p\right)=J\hat{\xi}(t).$$
Since $J\hat{\xi}(t)$ is the horizontal lift at $\gamma(t)$ of $J\xi$, we get that $\hat{\xi}$ is $\hat{\nabla}^{\bot}$-parallel in $\hat{M}$ for each $\xi\in \nu_{\hat{\pi}(p)}M$ and each $p\in \hat{M}$ if anf only if $J\nu_x M$ is tangent for each $x\in M$, that is, if and only if $M$ is a coisotropic submanifold.
\end{proof}

\begin{rem}
Let $J\eta$ be the  Hopf vector field of $\overline{N}_c$ and let $\{\varphi_t=e^{it}\}_{t\in\mathbb{R}}$ be its flow. Then  $\varphi_t$ is an isometry of $\overline{N}_c$ and  Lemma \ref{lemagen} can be stated as follows: $M$ is coisotropic submanifold if and only if $\varphi_t$ is a transvection with respect to the normal connection of $\hat{M}$, along $\varphi_t(p)$, for each $p\in \hat{M}$.
\end{rem}

\begin{lema}
Let $M$ be a coisotropic submanifold of a complex space form $\mathbb{S}_c$, with $c\neq 0$ and let $\hat{M}$ be its pullback via the Hopf fibration $\pi_c:\overline{N}_c\to \mathbb{S}_c$. Let $J\eta$ be the Hopf vector field. Fix $p\in \hat{M}$ and set $x=\pi(p)$.  Then
 \begin{enumerate}
 \item $\hat{R}^{\bot}_{J\eta,\hat{X}}=0$ for any horizontal vector $\hat{X}\in T_p M$;
 \item $[\hat{A}_{\hat{\xi}},\hat{A}_{\hat{\zeta}}]J\eta = 0$, for any $\hat{\xi},\ \hat{\zeta}\in \nu_p\hat{M}$;
 \item $A_{\xi}J\zeta=A_{\zeta}J\xi$ for any $\xi, \zeta\in \nu_x M$ (cf. \cite[Lemma 2.1]{YK})
     \end{enumerate}
\label{rh}
\label{Yano}
\end{lema}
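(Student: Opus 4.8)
The plan is to prove the three items in the \emph{reverse} of the order in which they are stated, because each is the engine for the previous one: the intrinsic symmetry (3) drives the computation of the commutator (2), and (2) together with the Ricci equation for $\hat{M}$ inside the \emph{real} space form $\overline{N}_c$ yields the curvature identity (1).

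First I would establish (3), which is the only genuinely computational point and the place where the Kähler condition and coisotropy interact. Fix normal vectors $\xi,\zeta\in\nu_x M$; since $M$ is coisotropic, $J\xi$ and $J\zeta$ are tangent to $M$. For a tangent vector $X$ I would compute $\alpha(X,J\zeta)=(\overline{\nabla}_X(J\zeta))^{\bot}$ using $\overline{\nabla}J=0$, writing $\overline{\nabla}_X(J\zeta)=J\overline{\nabla}_X\zeta=J(-A_\zeta X+\nabla^{\bot}_X\zeta)$ via (\ref{gausscodazzi}). Pairing with $\xi$ and using that $J$ is skew-symmetric, the term $\meti J\nabla^{\bot}_X\zeta,\xi\metd$ vanishes (it pairs the tangent vector $J\nabla^{\bot}_X\zeta$ with the normal $\xi$), leaving $\meti\alpha(X,J\zeta),\xi\metd=\meti A_\zeta X,J\xi\metd=\meti\alpha(X,J\xi),\zeta\metd$, which is symmetric in $\xi,\zeta$. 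Hence $\meti A_\xi J\zeta,X\metd=\meti A_\zeta J\xi,X\metd$ for all tangent $X$, which is exactly (3).

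Next I would deduce (2). By (\ref{lift5}) and coisotropy, $\hat A_{\hat\xi}J\eta=-(\widehat{J\xi})^{\top}=-\widehat{J\xi}$, because the horizontal lift of the tangent vector $J\xi$ is tangent to $\hat M$. Applying (\ref{lift4}) to $J\xi$ gives $\hat A_{\hat\zeta}\widehat{J\xi}=\widehat{A_\zeta J\xi}-\meti J\xi,J\zeta\metd J\eta=\widehat{A_\zeta J\xi}-\meti\xi,\zeta\metd J\eta$, using that $J$ is an isometry. Subtracting the analogous expression with $\xi,\zeta$ interchanged, the scalar terms cancel and I obtain $[\hat A_{\hat\xi},\hat A_{\hat\zeta}]J\eta=\widehat{A_\zeta J\xi-A_\xi J\zeta}$, which vanishes by (3). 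Finally, for (1) I would invoke the Ricci equation (\ref{ricci}) for $\hat M\subset\overline{N}_c$. The crucial point — and the whole reason the Hopf-lift strategy succeeds — is that $\overline{N}_c$ is a real space form (curvature $+1$ on $S^{2n+1}$, $-1$ on $H^{n+1}_1$), so its curvature $\overline{R}'$ has the form $\kappa\,(U\wedge V)$ and $\meti\overline{R}'_{U,V}\hat\xi,\hat\zeta\metd=0$ whenever $U,V$ are tangent and $\hat\xi,\hat\zeta$ normal to $\hat M$. Thus (\ref{ricci}) collapses to $\meti\hat R^{\bot}_{U,V}\hat\xi,\hat\zeta\metd=\meti[\hat A_{\hat\xi},\hat A_{\hat\zeta}]U,V\metd$; taking $U=J\eta$ (tangent to $\hat M$) and $V=\hat X$ horizontal, the right-hand side is $\meti[\hat A_{\hat\xi},\hat A_{\hat\zeta}]J\eta,\hat X\metd=0$ by (2), and since this holds for all normal $\hat\xi,\hat\zeta$ it forces $\hat R^{\bot}_{J\eta,\hat X}=0$.

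The main obstacle is really only (3): once that symmetry is secured, (2) is a two-line manipulation of the O'Neill lift formulas and (1) is immediate from the Ricci equation in a space form. The points I would double-check are the sign bookkeeping in (3) (the skew-symmetry of $J$ and the identity $\meti J\xi,J\zeta\metd=\meti\xi,\zeta\metd$), and that in the Lorentzian case $c=-4$ the orthogonal splitting $T\overline{N}_c|_{\hat M}=T\hat M\oplus\nu\hat M$ together with the space-form shape of $\overline{R}'$ still makes the mixed curvature term vanish, so the whole argument is uniform in the sign of $c$.
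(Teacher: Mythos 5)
Your proposal is correct and follows essentially the same route as the paper: item (3) is obtained by the same K\"ahler computation $\overline{\nabla}_X(J\zeta)=J\overline{\nabla}_X\zeta$ combined with coisotropy, item (2) reduces to (3) via the lift formulas (\ref{lift4})--(\ref{lift5}), and item (1) follows from (2) by the Ricci equation in the real (resp.\ Lorentzian) space form $\overline{N}_c$. The only difference is organizational — the paper first proves the three statements equivalent and then establishes (3), while you prove them in reverse order — and your sign in $[\hat A_{\hat\xi},\hat A_{\hat\zeta}]J\eta=\widehat{A_\zeta J\xi-A_\xi J\zeta}$ is in fact the correct one.
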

\begin{proof}

We will prove first that statements ($1$), ($2$) and ($3$) are equivalent.
Let $\xi,\;\zeta\in\nu_x M$ and let $\hat{\xi}$, $\hat{\zeta}$ be the horizontal lifts of $\xi$ and $\zeta$ at $p$ respectively.

Since $\overline{N}_c$ is a real space form (a Lorentzian space form in the case of $c<0$) the Ricci equation gives
\begin{equation}
\langle \hat{R}^{\bot}_{J\eta,\hat{X}}\hat{\xi},\hat{\zeta}\rangle=\langle[\hat{A}_{\hat{\xi}},\hat{A}_{\hat{\zeta}}]J\eta,\hat{X}\rangle.
\label{ricci2}
\end{equation}
for any horizontal tangent vector $\hat{X}\in T_p\hat{M}$. This proves the equivalence between (1) and (2).

On the other hand, from equations (\ref{lift4}) and (\ref{lift5}) we have

\begin{eqnarray*}
[\hat{A}_{\hat{\xi}},\hat{A}_{\hat{\zeta}}]J\eta&=&-\hat{A}_{\hat{\xi}}(\widehat{J\zeta})+\hat{A}_{\hat{\zeta}}(\widehat{J\xi})\\
 &=& -\widehat{A_{\xi}J\zeta}+\langle J\xi,J\zeta\rangle J\eta+\widehat{A_{\zeta}J\xi}-\langle J\xi,J\zeta\rangle J\eta\\
 &=&\widehat{A_{\xi}J\zeta-A_{\zeta}J\xi}
\end{eqnarray*}
This proves the equivalence between (2) and (3).

Let us prove (3). Let $Y\in T_x M$. Then
\begin{eqnarray*}
\meti A_{\xi} J\zeta, Y\metd &=& \meti \alpha(J\zeta, Y),\xi\metd =\meti \overline{\nabla}_Y J\zeta,\xi\metd =\meti J\overline{\nabla}_Y \zeta,\xi\metd\\
&=& \meti A_{\zeta}Y,J\xi\metd= \meti A_{\zeta}J\xi, Y\metd.
\end{eqnarray*}
Since $Y$ is arbitrary, this concludes the proof.
\end{proof}

The following is an immediate consequence of Lemma \ref{rh} and a result in \cite[Appendix]{Ol93}.
\begin{cor}
Let $\hat{M}$ be the pull-back to $\overline{N}_c$ of a coisotropic submanifold $M$ of a complex space form $\mathbb{S}_c$, $c\neq 0$ via the Hopf fibration $\pi$. Then for any piecewise-differentiable curve $\sigma:I\to \hat{M}$ there exist a horizontal curve $\sigma_0$ and a vertical curve $\gamma$ (with respect to $\pi$) such that $$\hat{\tau}^{\bot}_{\sigma}=\hat{\tau}^{\bot}_{\gamma}\circ \hat{\tau}^{\bot}_{\sigma_0},$$
where $\hat{\tau}^{\bot}$ denotes the $\hat{\nabla}^{\bot}$ parallel displacement on $\hat{M}$.
\label{corrh}
\end{cor}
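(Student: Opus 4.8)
The plan is to exhibit the decomposition geometrically and then reduce the equality of the two parallel transports to a variation-of-holonomy argument governed by the curvature identity in Lemma~\ref{rh}. First I would project $\sigma$ to the curve $x=\pi\circ\sigma$ in $M$ and let $\sigma_0$ be its horizontal lift with $\sigma_0(0)=\sigma(0)$; this $\sigma_0$ is the desired horizontal curve. Since $\sigma(t)$ and $\sigma_0(t)$ lie in the same $U(1)$-fiber over $x(t)$, there is a (piecewise smooth) function $\theta$ with $\theta(0)=0$ and $\sigma(t)=e^{i\theta(t)}\sigma_0(t)$. Taking $\gamma(u)=e^{iu}\sigma_0(1)$ for $u\in[0,\theta(1)]$ produces a vertical curve joining the endpoint $\sigma_0(1)$ of $\sigma_0$ to the endpoint $\sigma(1)$ of $\sigma$, so that both $\hat{\tau}^{\bot}_{\gamma}\circ\hat{\tau}^{\bot}_{\sigma_0}$ and $\hat{\tau}^{\bot}_{\sigma}$ are maps $\nu_{\sigma(0)}\hat{M}\to\nu_{\sigma(1)}\hat{M}$ with the correct source and target.

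To prove they agree I would interpolate through the family $F(s,t)=e^{is\theta(t)}\sigma_0(t)$, whose slices $\sigma_s=F(s,\cdot)$ run from $\sigma_0$ at $s=0$ to $\sigma$ at $s=1$ and all share the fixed initial point $\sigma_0(0)$. Letting $\beta_s(u)=e^{iu}\sigma_0(1)$, $u\in[0,s\theta(1)]$, be the vertical arc connecting $\sigma_0(1)$ to $\sigma_s(1)$, I set $Q(s)=(\hat{\tau}^{\bot}_{\beta_s})^{-1}\circ\hat{\tau}^{\bot}_{\sigma_s}$. The corollary follows once $Q$ is shown to be independent of $s$, since $\beta_0$ constant gives $Q(0)=\hat{\tau}^{\bot}_{\sigma_0}$, while $\beta_1=\gamma$ gives $Q(1)=(\hat{\tau}^{\bot}_{\gamma})^{-1}\circ\hat{\tau}^{\bot}_{\sigma}$. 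The constancy of $Q$ is exactly the parallel-transport lemma of \cite[Appendix]{Ol93}: its $s$-derivative is an integral of $\hat{R}^{\bot}$ evaluated on the pair $(\partial_s F,\partial_t F)$, once the motion of the endpoint is corrected by $\hat{\tau}^{\bot}_{\beta_s}$. Here the variation field $\partial_s F=\theta\, J\eta$ is vertical, so the integrand is $\hat{R}^{\bot}_{J\eta,\,\partial_t F}$; splitting $\partial_t F$ into its horizontal part $\hat{X}$ and a multiple of $J\eta$, the vertical part dies by skew-symmetry and the horizontal part is killed by Lemma~\ref{rh}(1), i.e. $\hat{R}^{\bot}_{J\eta,\hat{X}}=0$. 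Hence the integrand vanishes identically and $Q'\equiv 0$.

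The main obstacle I anticipate is not the vanishing of the curvature, which is handed to us by Lemma~\ref{rh}(1), but the bookkeeping needed to legitimately invoke \cite[Appendix]{Ol93}: one must check that the vertical/horizontal splitting of $T\hat{M}$ satisfies the hypotheses of that lemma, treat a merely piecewise-differentiable $\sigma$ (so that $\theta$ and the family $F$ are only piecewise smooth and the variation is carried out on each smooth piece and then concatenated), and verify that the endpoint corrections genuinely assemble into the vertical transports $\hat{\tau}^{\bot}_{\beta_s}$ with no leftover residue. That the arcs $\beta_s$ behave well under $\hat{\nabla}^{\bot}$ is guaranteed by the transvection property recorded in the Remark following Lemma~\ref{lemagen}. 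Appealing to \cite[Appendix]{Ol93} is precisely what lets us avoid rederiving this variation-of-holonomy formula from scratch, so in the final write-up I would simply state the two inputs and confirm that the splitting meets the lemma's hypotheses.
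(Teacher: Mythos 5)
Your proposal is correct and follows exactly the route the paper intends: the paper states the corollary as an immediate consequence of Lemma \ref{rh}(1) together with the parallel-transport decomposition result in \cite[Appendix]{Ol93}, which are precisely the two ingredients you invoke. You have simply unwound the variation-of-holonomy argument that the cited appendix encapsulates, and your bookkeeping (vertical variation field, horizontal/vertical splitting of $\partial_t F$, endpoint correction by the vertical arcs) is sound.
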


\begin{prop} Let $M$ be a coisotropic submanifold of a complex space form $\mathbb{S}_c$, with $c\neq 0$ and let $\hat{M}$ be its pullback via the Hopf fibration $\pi_c:\overline{N}_c\to \mathbb{S}_c$.  Fix $p\in \hat{M}$ and set $x=\pi(p)$.  Then the action of $\Hol^0_{x}(M,\nabla^{\bot})$ on $\nu_{x}M$ identifies, via conjugation with $d\pi_p$, with the action of $\Hol^0_p(\hat{M},\hat{\nabla}^{\bot})$ on $\nu_p\hat{M}$.
\label{identificacion}
\end{prop}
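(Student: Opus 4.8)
The plan is to reduce the statement to two facts already established and then transfer them through a decomposition of arbitrary curves into horizontal and vertical pieces. The first fact is equation (\ref{lift4}): along the horizontal lift $\hat c$ of a curve $c$ in $M$ one has $\hat\nabla^{\bot}_{\hat X}\hat\xi=\widehat{\nabla^{\bot}_X\xi}$, so the $\hat\nabla^{\bot}$-parallel transport of a normal vector is the horizontal lift of the $\nabla^{\bot}$-parallel transport in $M$; in symbols $\hat\tau^{\bot}_{\hat c}(\hat\xi)=\widehat{\tau^{\bot}_c(\xi)}$ whenever $\xi=d\pi_p(\hat\xi)$. The second is Lemma \ref{lemagen}, which says that horizontal lifts of a fixed normal vector are $\hat\nabla^{\bot}$-parallel along vertical curves; thus, under the identification $d\pi$, transport along any vertical curve is the identity, and in particular every fibre loop $\delta(t)=e^{it}p$ has trivial normal holonomy. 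Since $d\pi_p$ is an isometric isomorphism $\nu_p\hat M\to\nu_xM$, it remains to show that the two restricted holonomy groups coincide after conjugation by $d\pi_p$.

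First I would prove $\Hol^0_x(M,\nabla^{\bot})\subseteq d\pi_p\,\Hol^0_p(\hat M,\hat\nabla^{\bot})\,d\pi_p^{-1}$. Take a null-homotopic loop $c$ at $x$ and lift it horizontally to $\hat c$ starting at $p$; since the fibres are one-dimensional, $\hat c$ ends at some $q=e^{i\theta}p$, and I close it with a vertical curve $\gamma$ from $q$ to $p$, forming a loop $\beta$ (namely $\hat c$ followed by $\gamma$) at $p$. Combining the two facts, $\hat\tau^{\bot}_{\beta}=\hat\tau^{\bot}_{\gamma}\circ\hat\tau^{\bot}_{\hat c}$ sends $\hat\xi$ to the horizontal lift $\widehat{\tau^{\bot}_c(\xi)}$ at $p$, which equals $d\pi_p^{-1}\bigl(\tau^{\bot}_c(d\pi_p\hat\xi)\bigr)$, so that $d\pi_p\circ\hat\tau^{\bot}_{\beta}\circ d\pi_p^{-1}=\tau^{\bot}_c$.

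For the reverse inclusion I would invoke Corollary \ref{corrh}: any loop $\sigma$ at $p$ decomposes as $\hat\tau^{\bot}_{\sigma}=\hat\tau^{\bot}_{\gamma}\circ\hat\tau^{\bot}_{\sigma_0}$ with $\sigma_0$ the horizontal lift of $c:=\pi\circ\sigma$ (a loop at $x$) and $\gamma$ vertical. The same two facts then give $d\pi_p\circ\hat\tau^{\bot}_{\sigma}\circ d\pi_p^{-1}=\tau^{\bot}_c$, which lies in $\Hol_x(M,\nabla^{\bot})$; and $c$ is null-homotopic whenever $\sigma$ is, since $\pi_{*}$ preserves homotopy classes.

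The hard part is the homotopy bookkeeping needed to land inside the restricted groups rather than the full ones. The reverse direction is automatic, as just noted. In the forward direction, however, the loop $\beta$ need not be null-homotopic: it projects to the null-homotopic $c$, so $[\beta]$ lies in the kernel of $\pi_{*}$, i.e. in the image of $\pi_1$ of the fibre, whence $[\beta]=[\delta^{k}]$ for some integer $k$. The resolution is exactly the triviality of fibre holonomy from Lemma \ref{lemagen}: the loop $\beta*\delta^{-k}$ is null-homotopic and, because $\hat\tau^{\bot}_{\delta}=\mathrm{Id}$, has the same normal holonomy as $\beta$; hence $\hat\tau^{\bot}_{\beta}\in\Hol^0_p(\hat M,\hat\nabla^{\bot})$. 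This is precisely the point at which the coisotropy of $M$, through Lemma \ref{lemagen}, enters decisively.
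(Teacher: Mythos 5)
Your proof is correct and follows essentially the same route as the paper: the forward inclusion via horizontal lift of a loop closed up by a vertical fibre segment, equation (\ref{lift4}) for horizontal transport, Lemma \ref{lemagen} for triviality of vertical transport, and Corollary \ref{corrh} for the reverse inclusion. The one place you go beyond the paper is the homotopy bookkeeping showing that the closed-up lift $\beta$ lands in the \emph{restricted} holonomy group (via $[\beta]=[\delta^{k}]$ in $\ker\pi_{*}$ and the trivial fibre holonomy); the paper leaves this point implicit, so your addition is a welcome refinement rather than a departure.
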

\begin{proof}

Fix $p\in\hat{M}$ and let $x=\pi(p)\in M$. We can consider the restricted normal holonomy group of $M$ at $x$ acting on $\nu_p\hat{M}$ via $d\pi_p$ in the following way.

If $\tau^{\bot}\in \Hol^{0}_x(M,\nabla^{\bot})$ and $\hat{\xi}\in\nu_p \hat{M}$, then $$\tau^{\bot}\cdot \hat{\xi}=(d\pi_{p|\nu_p\hat{M}})^{-1}\circ\tau^{\bot}\circ d\pi_p(\hat{\xi}).$$

If $c(t)$ is a loop in $M$ based at $x$, then its horizontal lift $\hat{c}(t)$ at $p$ is a curve in $\hat{M}$ such that $\pi(\hat{c}(1))=\pi(p)=x$. There is a unique vertical curve $\delta(t)=e^{i\theta t}\hat{c}(1)$ in $\hat{M}$ joining $\hat{c}(1)$ and $\hat{c}(0)=p$, for some fixed real number $\theta$. Consider the loop $\sigma(t)$ based at $p$ obtained by moving along $\hat{c}$ from $p$ to $\hat{c}(1)$  and then along $\delta$ from $\hat{c}(1)$ back to $p$. Then if $\tau^{\bot}$ is the $\nabla^{\bot}$-parallel displacement in $M$ along $c$,
from  (\ref{lift4}) and Lemma \ref{lemagen} we obtain that the parallel displacement of any normal vector $\xi_p\in \nu_p\hat{M}$ along $\sigma$ is actually $\tau^{\bot}\cdot  \xi_p$.

Conversely, if $\sigma$ is a loop in $\hat{M}$ based at $p$, then by Corollary \ref{corrh}, there exist a vertical curve $\gamma$ and a horizontal curve $\sigma_0$ starting at $p$ such that $\hat{\tau}^{\bot}_{\sigma}=\hat{\tau}^{\bot}_{\gamma}\circ\hat{\tau}^{\bot}_{\sigma_0}$. Now, if $\tau^{\bot}$ is the $\nabla^{\bot}$-parallel displacement in $M$ along the loop $\pi(\sigma_0)$, then from equation (\ref{lift4}) and Lemma \ref{lemagen} it is easy to see that for any $\xi_p\in\nu_p\hat{M}$,  $\hat{\tau}^{\bot}_{\gamma}\circ\hat{\tau}^{\bot}_{\sigma_0}(\xi_p)=\tau^{\bot}\cdot\xi_p$.

This shows that the action of $\Hol^{0}_x(M,\nabla^{\bot})$ on $\nu_x M$ is the same, via conjugation with $d\hat{\pi}_p$, as the action of $\Hol^{0}_p(\hat{M},\hat{\nabla}^{\bot})$ on $\nu_p \hat{M}$.
\end{proof}

\subsection{Proof of Theorem \ref{teoremagen} when $c>0$}

From Proposition \ref{identificacion} it follows that:

1) The action of $\Hol^0_{\pi(p)}(M,\nabla^{\bot})$ on $\nu_{\pi(p)}M$ identifies, via conjugation with $d\pi_p$, with the action of $\Hol^0_p(\hat{M},\hat{\nabla}^{\bot})$ on $\nu_p\hat{M}$.

Since $\hat{M}$ is a submanifold of a sphere, the Normal Holonomy Theorem \cite{Ol90} implies that:

2) $\Hol^0_p(\hat{M},\hat{\nabla}^{\bot})$ acts on $\nu_p\hat{M}$ as the holonomy representation of a Riemannian symmetric space.

This proves Theorem \ref{teoremagen} when $c>0$. \hfill $\Box$

\subsection{Proof of Theorem \ref{teoremagen} when $c<0$}

From Proposition \ref{identificacion} it follows that:

1) The action of $\Hol^0_{\pi(p)}(M,\nabla^{\bot})$ on $\nu_{\pi(p)}M$ identifies, via conjugation with $d\pi_p$, with the action of $\Hol^0_p(\hat{M},\hat{\nabla}^{\bot})$ on $\nu_p\hat{M}$.

Since the pull-back $\hat{M}$ is a Lorentzian submanifold of the anti-De-Sitter space, Olmos' normal holonomy theorem \cite{Ol90} can not be used directly. Actually, Olmos' normal holonomy theorem is not true for an arbitrary Lorentzian submanifold of the anti-De-Sitter space.
However, Olmos' proof can be adapted to our case i.e. when $\hat{M}$ is the pull-back of a coisotropic submanifold of the complex hyperbolic space.

Theorem \ref{teoremagen} for $c<0$ is a consequence of Proposition \ref{identificacion} and the following result.


\begin{prop}
\label{normalholonomy}
Let $M$ be a coisotropic submanifold of the complex hyperbolic space $\mathbb{C}H^n$ and let $\pi:H^{n+1}_1\to \mathbb{C}H^n$ be the Hopf fibration. Let $\hat{M}\subset H^{n+1}_1$ be the pull-back of $M$.  Let $p\in \hat{M}$ and let $\Hol^0_p(\hat{M},\hat{\nabla}^{\perp})$ be the restricted normal holonomy group at $p$.

Then $\Hol^0_p(\hat{M},\hat{\nabla}^{\perp})$ is compact, there exists a unique (up to order) orthogonal decomposition $\nu_p\hat{M}=V_0\oplus \cdots\oplus V_k$ of the normal space $\nu_p \hat{M}$ into $\Hol^0_p(\hat{M},\hat{\nabla}^{\perp})$-invariant subspaces and there exist normal subgroups $\Phi_0,\cdots,\Phi_k$ of $\Hol^0_p(\hat{M},\hat{\nabla}^{\perp})$ such that
\begin{itemize}
\item[i)] $\Hol^0_p(\hat{M},\hat{\nabla}^{\perp})=\Phi_0\times \cdots \times \Phi_k$ (direct product);
\item[ii)] $\Phi_i$ acts trivially on $V_j$ if $i\neq j$;
\item[iii)] $\Phi_0=\{1\}$ and if $i\geq 1$, $\Phi_i$ acts irreducibly on $V_i$ as the isotropy representation of an irreducible Riemannian symmetric space.
\end{itemize}
\end{prop}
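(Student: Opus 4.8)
The plan is to run Olmos' proof of the Normal Holonomy Theorem almost verbatim, checking at each stage that the only genuinely Riemannian input it needs is the positive-definiteness of the \emph{normal} bundle together with the space-form form of the Ricci equation, both of which persist in our Lorentzian setting. The coisotropy of $M$, encoded in Lemma \ref{rh}, is what confines all the relevant geometry to a Euclidean subspace.

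First I would record that $\nu_p\hat{M}$ is Euclidean. The anti-de Sitter space $H^{n+1}_1$ is Lorentzian of signature $(1,2n)$, its timelike tangent direction at $p$ being spanned by $J\eta$ (recall $g_{(-4)}(J\eta,J\eta)=-1$). Since $J\eta$ is vertical, hence tangent to $\hat{M}$, the entire timelike direction is absorbed by $T_p\hat{M}=\mathbb{R}J\eta\oplus\hat{H}_p$, so the orthogonal complement satisfies $\nu_p\hat{M}\subset H_p$ and is positive definite. Thus $\hat{\nabla}^{\perp}$ is a metric connection on a Euclidean bundle and $\Hol^0_p(\hat{M},\hat{\nabla}^{\perp})\subset SO(\nu_p\hat{M})$. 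Applying the Borel--Lichnerowicz decomposition to this holonomy representation yields at once the unique orthogonal decomposition $\nu_p\hat{M}=V_0\oplus V_1\oplus\cdots\oplus V_k$, with $V_0$ the fixed subspace and the $V_i$ ($i\geq 1$) nontrivial and irreducible, together with the direct product $\Phi_0\times\cdots\times\Phi_k$ in which $\Phi_i$ acts trivially off $V_i$ and $\Phi_0=\{1\}$. This gives items (i), (ii), and the first part of (iii).

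Next I would reduce the generation of the holonomy algebra to the Euclidean horizontal directions. Because $H^{n+1}_1$ is a Lorentzian space form, the ambient curvature has no normal component and the Ricci equation collapses to $\langle\hat{R}^{\perp}_{\hat X,\hat Y}\hat\xi,\hat\zeta\rangle=\langle[\hat A_{\hat\xi},\hat A_{\hat\zeta}]\hat X,\hat Y\rangle$. By Lemma \ref{rh}(1), $\hat{R}^{\perp}_{J\eta,\hat X}=0$, so the curvature is carried entirely by horizontal pairs $\hat X,\hat Y\in\hat H_p$, which span a Euclidean space; and by Lemma \ref{rh}(2) the operators $[\hat A_{\hat\xi},\hat A_{\hat\zeta}]$ annihilate $J\eta$ and restrict to skew-symmetric endomorphisms of $\hat H_p$. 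The Ambrose--Singer theorem then expresses $\mathrm{Lie}(\Hol^0_p)$ as the span of parallel translates of the $\hat R^{\perp}_{\hat X,\hat Y}$, and by Corollary \ref{corrh} together with Lemma \ref{lemagen} these translates may be taken along horizontal curves alone, the vertical transport preserving horizontal lifts. In this way the generation of the normal holonomy algebra proceeds exactly as in the Riemannian case, with the Euclidean $\hat H_p$ playing the role of the tangent space and the Euclidean $\nu_p\hat{M}$ the role of the normal space.

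Finally, on each nontrivial factor $V_i$ I would build, following Olmos \cite{Ol90,BCO}, a nonzero algebraic curvature tensor with values in $\mathrm{Lie}(\Phi_i)$ that is invariant under the closure of $\Phi_i$ (a compact subgroup of $SO(V_i)$): invariance is obtained by averaging the Ricci-type curvature over that closure, and non-triviality is guaranteed by Ambrose--Singer since $V_i$ is not flat. Simons' holonomy-system theorem then forces $\Phi_i$ to be the isotropy representation of an irreducible Riemannian symmetric space, the exceptional groups transitive on spheres being excluded as they support no nonzero invariant curvature tensor of symmetric type. Since each $\Phi_i$ is then compact and $\Phi_0=\{1\}$, the product $\Hol^0_p(\hat M,\hat\nabla^{\perp})$ is compact, which closes the argument. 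The main obstacle, and the sole point where the Lorentzian signature could intrude, is the reduction just described: one must ensure that no null or timelike direction enters the curvature operators fed into Simons' theorem. This is precisely what the coisotropy of $M$ secures through Lemma \ref{rh}, which decouples $J\eta$ from the normal curvature and confines everything to the Euclidean pair $(\hat H_p,\nu_p\hat M)$; with that decoupling in hand, Olmos' proof applies without further change.
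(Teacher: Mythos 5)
Your proposal is correct and follows essentially the same route as the paper: observe that $\nu_p\hat M$ is Euclidean because the timelike direction $J\eta$ is tangent, use Lemma \ref{rh} to decouple $J\eta$ from the normal curvature and the shape-operator commutators, and then run Olmos' proof on the resulting Euclidean data. The one step the paper makes explicit that you leave implicit is the computation showing that the adapted tensor $\langle\mathcal{R}^{\bot}(\xi_1,\xi_2)\xi_3,\xi_4\rangle=-\tfrac{1}{2}\,\mathrm{tr}\left([\hat A_{\xi_1},\hat A_{\xi_2}]\circ[\hat A_{\xi_3},\hat A_{\xi_4}]\right)$ still has non-positive sectional curvature despite the Lorentzian trace, precisely because $[\hat A_{\xi},\hat A_{\zeta}]J\eta=0$ makes the timelike contribution vanish; it is this sign-definiteness (rather than Ambrose--Singer alone, as you state) that guarantees the group-averaged invariant curvature tensor is nonzero on each non-flat factor.
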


\begin{proof}
The key object in Olmos' proof is the algebraic curvature tensor $\mathcal{R}^{\bot}$ on $\nu \hat{M}$ {\bf with non positive sectional curvature} and that carries the same geometric information as the normal curvature tensor $\hat{R}^{\perp}$ of $\hat{M}$.

Following \cite{Ol90} we introduce the algebraic curvature tensor $\mathcal{R}^{\bot}$ on $\nu \hat{M}$ by the formula:
\[\meti \crn(\xi_1,\xi_2)\xi_3,\xi_4\metd := -\frac{1}{2}tr([\hat{A}_{\xi_1},\hat{A}_{\xi_2}]\circ[\hat{A}_{\xi_3},\hat{A}_{\xi_4}]) \]

So in the same way as in \cite{Ol90}, one can prove

\begin{itemize}
\item[(i)] $\crn(\xi_1,\xi_2)=-\crn(\xi_2,\xi_1)$;
\item[(ii)] $\meti \crn(\xi_1,\xi_2)\xi_3,\xi_4\metd=-\meti \xi_3,\crn (\xi_1,\xi_2)\xi_4\metd$;

\item[(iii)] $\meti \crn(\xi_1,\xi_2)\xi_3,\xi_4\metd= \meti \crn(\xi_3,\xi_4)\xi_1,\xi_2\metd$

\item[(iv)] $\crn(\xi_1,\xi_2)\xi_3+\crn(\xi_2,\xi_3)\xi_1+\crn(\xi_3,\xi_1)\xi_2=0$.

\item[(v)] $\displaystyle
\mathrm{Im}(\crn_p)=\mathrm{Im}(\hat{R}_p^{\bot}).$
\end{itemize}

Now we compute the sectional curvature $\langle \mathcal{R}^{\bot}(\xi,\zeta)\zeta,\xi \rangle$.
Choose an orthonormal basis $\{e_0,e_1,\cdots,e_{k}\}$ of $T_p\hat{M}$ such that $e_0=J\eta$ is the Hopf vector and therefore $e_1,\cdots, e_{k}$ are horizontal vectors.

So we have
\begin{eqnarray*}
\langle \mathcal{R}^{\bot}(\xi,\zeta)\zeta,\xi \rangle &=&\frac{1}{2}tr([\hat{A}_{\xi},\hat{A}_{\zeta}]^2)\\
&=&-\frac{1}{2}\meti [\hat{A}_{\xi},\hat{A}_{\zeta}]^2J\eta,J\eta\metd+\frac{1}{2}\sum_{i=1}^{k}\meti [\hat{A}_{\xi},\hat{A}_{\zeta}]^2e_i,e_i\metd\\
&=&-\frac{1}{2}\sum_{i=1}^{k}\meti [\hat{A}_{\xi},\hat{A}_{\zeta}]e_i,[\hat{A}_{\xi},\hat{A}_{\zeta}]e_i\metd
\leq 0
\end{eqnarray*}
since by Lemma \ref{rh} $[\hat{A}_{\xi},\hat{A}_{\zeta}]J\eta=0$ hence the vectors $[\hat{A}_{\xi},\hat{A}_{\zeta}]e_i$ are horizontal.

Observe also that
\begin{equation}
\langle \mathcal{R}^{\bot}(\xi,\zeta)\zeta,\xi \rangle = 0 \text{ if and only if }[\hat{A}_{\zeta},\hat{A}_{\xi}]=0
\label{zero}
\end{equation}

Now the proof follows as in \cite{Ol90}.
\end{proof}

\section{Lagrangian submanifolds: Proof of Corollary \ref{cor:RicciFlat}}

Since Lagrangian submanifolds are in particular coisotropic submanifolds we get the following result.

\begin{thm} \label{Lagrangian} Let $M$ be a Lagrangian submanifold a complex space form and let $\Hol^0_p(M,\nabla^{\perp})$ be the restricted holonomy group of the normal connection. Then $\Hol^0_p(M,\nabla^{\perp})$ acts on the normal space $\nu_pM$ as the holonomy representation of a Riemannian symmetric space.
\label{lagrangian}
\end{thm}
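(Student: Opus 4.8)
The plan is to observe that a Lagrangian submanifold is a special instance of a coisotropic submanifold, so that the statement follows immediately from Theorem \ref{teoremagen} with no further work. First I would verify the relevant inclusion of classes at the pointwise level. By definition, $M$ is Lagrangian when $JT_xM=\nu_xM$ for every $x\in M$. Applying $J$ to both sides and using $J^2=-\mathrm{Id}$ gives $J\nu_xM=-T_xM=T_xM$, so in particular $J\nu_xM\subset T_xM$; moreover, since $J$ is an isometric isomorphism, $\dim\nu_xM=\dim T_xM$. In the notation of Section \ref{hopf}, the maximal complex distribution is $\mathcal{D}_x=\{0\}$, whence $\mathcal{D}^{\bot}_x=T_xM$ and $\dim\mathcal{D}^{\bot}_x=\dim T_xM=\dim\nu_xM$. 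This is exactly the coisotropic condition, so every Lagrangian submanifold of $\mathbb{S}_c$ is coisotropic.

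Second, I would simply invoke Theorem \ref{teoremagen} applied to $M$ regarded as a coisotropic submanifold. That theorem asserts precisely that the restricted normal holonomy group $\Hol^0_p(M,\nabla^{\perp})$ acts on $\nu_pM$ as the holonomy representation of a Riemannian symmetric space, i.e. a flat factor together with an $s$-representation. This is the desired conclusion, and nothing beyond the specialization of Theorem \ref{teoremagen} is needed.

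There is essentially no obstacle in proving the present statement: its entire content is subsumed by Theorem \ref{teoremagen}, and the only verification required is the elementary linear-algebraic fact that the Lagrangian condition implies the coisotropic one. The statement is isolated as a separate theorem mainly because it is the natural bridge to Corollary \ref{cor:RicciFlat}, where the ambient complex structure $J$ furnishes a canonical isomorphism $T_xM\xrightarrow{\ J\ }\nu_xM$ and hence an identification of the Levi-Civita and normal holonomy groups. The genuine difficulty lies in that subsequent deduction (matching the symmetric-space holonomy list against the Ricci-flat constraint), not in the theorem stated here.
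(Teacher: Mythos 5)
Your proposal is correct and coincides with the paper's own argument: the paper also deduces Theorem \ref{lagrangian} immediately from Theorem \ref{teoremagen} by noting that Lagrangian submanifolds are in particular coisotropic. Your explicit verification that $JT_xM=\nu_xM$ forces $J\nu_xM\subset T_xM$ and $\dim\mathcal{D}^{\bot}_x=\dim\nu_xM$ is exactly the (omitted) linear-algebra step, so nothing is missing.
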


We give now the proof of Corollary \ref{cor:RicciFlat}.

\begin{proof} Since $M$ is a Lagrangian submanifold,  the complex structure $J$ defines  an isomorphism between the tangent space $T_pM$ and the normal space $\nu_pM$. Let $\xi(t)$ be a $\nabla^{\perp}$-parallel normal field along a loop $\gamma(t)$ based at $p$. Then $J\xi(t)$ is a tangent field along $\gamma(t)$ which is parallel with respect to the Levi-Civita connection $\nabla$ of $M$. Indeed, \[ \overline{\nabla}_{\gamma'(t)} J \xi(t) = J \left( \nabla^{\perp}_{\gamma'(t)}\xi(t) - A_{\xi(t)}(\gamma'(t)) \right) = - J A_{\xi(t)}(\gamma'(t)), \]
which shows that $\overline{\nabla}_{\gamma'(t)} J \xi(t)$ is normal to $M$, and hence  $\nabla_{\gamma'(t)} J \xi(t)=0$.

In a similar way, if $X(t)$ is a vector field of $M$ along $\gamma$, parallel with respect to the Levi-Civita connection of $M$, we get that $JX(t)$ is a
$\nabla^{\perp}$-parallel vector field along $\gamma$.

Then the isomorphism $J$ is an intertwiner isomorphism between the normal and the tangent holonomy groups. Hence, by the above theorem, the tangent holonomy group acts as an s-representation.

So the tangent holonomy group is the holonomy group of a Riemannian symmetric space. Since $M$ is Ricci flat, either  $\Hol^0(M,\nabla^{\perp})=SO(TM)$ or $M$ is flat. \end{proof}

\section{Totally real submanifolds: some results about their normal holonomy group}
\label{Section:TotallyReal}

Along this section we keep the notations of Section \ref{hopf}.

\subsection{An example}

Let us give an example showing that the strategy we followed for coisotropic submanifolds does not work for totally real submanifolds. Namely, the normal holonomy group of the pull-back $\hat{M}$ can be different from the normal holonomy group of $M$.\\

Let $M \subset \mathbb{C}P^n$, $n>1$ be a curve of the complex projective space. Then the normal holonomy group of $M$ is of course trivial.

To compute the normal curvature tensor of $\hat{M}$ we need to compute the shape operators of $\hat{M}$. Let $T$ be a unit vector field tangent to $M$ and denote by $\hat{T}$ its horizontal lift to $\hat{M}$. Then $\{ J\eta, \hat{T} \}$ is an orthogonal frame of $T\hat{M}$.
Observe that the normal bundle of $M$ splits as $\nu M = \mathbb{R} JT \oplus (\mathbb{R} JT)^{\bot}$ where $J(\mathbb{R} JT)^{\bot} = (\mathbb{R} JT)^{\bot}$.
Then the normal bundle of the pull-back $\hat{M}$ splits as
\[ \nu\hat{M} = \mathbb{R }J\hat{T} \oplus  (\mathbb{R }J\hat{T})^{\bot}\]
where $J(\mathbb{R }J\hat{T})^{\bot} = (\mathbb{R }J\hat{T})^{\bot}$.
Let $\xi$ be a section of  $(\mathbb{R} JT)^{\bot}$ and consider the section $\hat{\xi}$ of $\nu(\hat{M})$.
Then, by equations (\ref{lift4}) and (\ref{lift5}) the shape operator  $\hat{A}_{\hat{\xi}}$ of $\hat{M}$ in direction $\hat{\xi}$ is given in the frame $\{ J\eta, \hat{T} \}$ by the following $2 \times 2$ matrix: \[ \hat{A}_{\hat{\xi}} =  \left(
                                           \begin{array}{cc}
                                             0 & 0 \\
                                             0 & \langle A_{\xi}(T), T \rangle \\
                                           \end{array}
                                         \right) \, \, .
\]
The shape operator  $\hat{A}_{J\hat{T}}$ of $\hat{M}$ is given in the frame $\{ J\eta, \hat{T} \}$ by the following $2 \times 2$ matrix: \[ \hat{A}_{J\hat{T}} =  \left(
                                           \begin{array}{cc}
                                             0 & 1 \\
                                             1 & \langle A_{J T}(T), T \rangle \\
                                           \end{array}
                                         \right) \, \, .
\]

So we have the following proposition.
\begin{prop} Let $M \subset \mathbb{C}P^n$ be a curve of the complex projective space. Then the pull-back $\hat{M}$ has flat normal bundle if and only if \[ \langle A_{\xi}(T), T \rangle  = 0 \]
for all $\xi \in \nu(M)$ such that $\langle \xi, JT \rangle = 0$. Equivalently, $\hat{M}$ has flat normal bundle if and only if the curve $M$ is a so called holomorphic circle \cite[page 8, Definition]{AMU} (also called K\"ahler-Frenet curve \cite[Introduction]{MT}). Namely, \[ \overline{\nabla}_T T = \kappa J T \, \,  \]
where $\kappa$ is a smooth function on $M$.
\end{prop}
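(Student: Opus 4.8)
The plan is to read off the flatness of $\nu\hat{M}$ directly from the shape operators already computed above, via the Ricci equation. Since $\hat{M}$ is a submanifold of the real space form $\overline{N}_4=S^{2n+1}$, the ambient curvature contributes no normal component, so the Ricci equation reduces to $\langle\hat{R}^{\bot}_{X,Y}\hat\xi,\hat\zeta\rangle=\langle[\hat A_{\hat\xi},\hat A_{\hat\zeta}]X,Y\rangle$. Hence $\hat{M}$ has flat normal bundle if and only if $[\hat A_{\hat\xi},\hat A_{\hat\zeta}]=0$ for every pair of normal vectors $\hat\xi,\hat\zeta$, a purely algebraic condition on the two families of $2\times 2$ matrices displayed above.

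First I would use the splitting $\nu\hat{M}=\mathbb{R}J\hat{T}\oplus(\mathbb{R}J\hat{T})^{\bot}$, whose directions are exhausted by $J\hat{T}$ together with the horizontal lifts $\hat\xi$ of sections $\xi$ of $(\mathbb{R}JT)^{\bot}$, and check all pairs of brackets. For two lifts $\hat\xi,\hat\xi'$ with $\xi,\xi'\in(\mathbb{R}JT)^{\bot}$ both shape operators have the diagonal form $\mathrm{diag}\bigl(0,\langle A_{\bullet}T,T\rangle\bigr)$, so they commute automatically. The only non-trivial bracket is between $\hat A_{J\hat{T}}$ and $\hat A_{\hat\xi}$, and a direct computation in the frame $\{J\eta,\hat{T}\}$ gives
\[
[\hat A_{J\hat{T}},\hat A_{\hat\xi}]=\begin{pmatrix}0 & \langle A_{\xi}T,T\rangle\\ -\langle A_{\xi}T,T\rangle & 0\end{pmatrix},
\]
which vanishes precisely when $\langle A_{\xi}T,T\rangle=0$. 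Since these are all the pairs, $\hat{M}$ has flat normal bundle if and only if $\langle A_{\xi}T,T\rangle=0$ for every $\xi\in\nu M$ with $\langle\xi,JT\rangle=0$, which is the first equivalence.

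Finally I would translate this into the holomorphic-circle condition. As $M$ is a curve, the intrinsic Levi-Civita connection of the unit field $T$ is trivial, so the Gauss formula gives $\overline{\nabla}_T T=\alpha(T,T)\in\nu M$, with $\langle\alpha(T,T),\xi\rangle=\langle A_{\xi}T,T\rangle$. The flatness condition says exactly that $\alpha(T,T)$ is orthogonal to every $\xi\perp JT$, i.e. $\alpha(T,T)\in\mathbb{R}JT$; writing $\kappa:=\langle A_{JT}T,T\rangle$ this is $\overline{\nabla}_T T=\kappa JT$. The computations are elementary because the shape operators are handed to us; the only points requiring care are the bookkeeping that one has genuinely checked \emph{all} pairs of normal directions (not just the two distinguished ones), and the observation that the off-diagonal entry in the single non-trivial bracket is an intertwining obstruction that no choice of $\xi$ can cancel. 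The main conceptual step is recognizing that this algebraic vanishing is equivalent to the geometric statement that the second fundamental form of the curve points in the direction $JT$.
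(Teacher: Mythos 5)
Your argument is correct and is essentially the paper's own proof: both reduce flatness of $\nu\hat{M}$ via the Ricci equation in the sphere to the commutation of the shape operators and then evaluate the single nontrivial bracket $[\hat{A}_{\hat\xi},\hat{A}_{J\hat{T}}]$ in the frame $\{J\eta,\hat{T}\}$. You merely supply more detail than the paper does, by explicitly checking that the remaining pairs of shape operators commute and by spelling out the translation into the holomorphic-circle condition $\overline{\nabla}_T T=\kappa JT$.
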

\begin{proof} The pull-back $\hat{M}$ is a submanifold of a sphere. Then by the Ricci equation, $M$ has flat normal bundle if and only if all shape operators commute. Then the conclusion follows from
\[ [\hat{A}_{\hat{\xi}}, \hat{A}_{J\hat{T}}] = \left(
                                                 \begin{array}{cc}
                                                   0 & -\langle A_{\xi}(T), T \rangle \\
                                                \langle A_{\xi}(T), T \rangle & 0 \\
                                                 \end{array}
                                               \right)
\]
\end{proof}

\begin{rem} Interesting examples of holomorphic circles are the so called magnetic geodesics (cf. \cite[Introduction]{GS}).
\end{rem}

Then if $M \subset \mathbb{C}P^n$ is not a holomorphic circle we get a submanifold whose normal holonomy group is different from the normal holonomy group of its pull-back $\hat{M}$. Indeed, by the above proposition the normal bundle of $\hat{M}$ is not flat whilst the normal bundle of $M$ is. Moreover, by \cite[Exercise 4.6.16, page 136]{BCO} we get the following proposition.
\begin{prop} Let $M$ be a full curve of $\mathbb{C}P^n$ which is not an holomorphic circle and let $\hat{M}$ be its pull-back. Then the normal holonomy group $\Hol_p(\hat{M}, \hat{\nabla}^{\perp})$ acts transitively on the unit sphere of the normal space.
\end{prop}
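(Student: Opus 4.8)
The plan is to check the two hypotheses that make \cite[Exercise 4.6.16, page 136]{BCO} applicable to the surface $\hat M\subset \overline N_4=S^{2n+1}$, namely that $\hat M$ is full and has non-flat normal bundle, and then to let transitivity emerge from the Normal Holonomy Theorem applied to the very simple curvature operator of a surface.

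First I would verify that $\hat M$ is full in $S^{2n+1}$. Since $\hat M=\pi^{-1}(M)$ is a union of Hopf fibers it is invariant under the $U(1)$-action $z\mapsto e^{i\theta}z$, so its real linear span in $\mathbb{C}^{n+1}=\R^{2n+2}$ is a $U(1)$-invariant real subspace, hence a complex subspace $\mathbb{C}^{m+1}$. Then $M=\pi(\hat M)$ lies in the totally geodesic $\mathbb{C}P^m$, and fullness of $M$ forces $m=n$; thus $\hat M$ spans $\mathbb{C}^{n+1}$ and is full. Non-flatness of the normal bundle of $\hat M$ is exactly the previous proposition, since $M$ is assumed not to be a holomorphic circle. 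Finally, as the unit sphere of $\nu_p\hat M$ is connected it is enough to prove that the restricted group $\Hol^0_p(\hat M,\hat\nabla^{\perp})$ acts transitively on it.

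Next I would exploit that $\hat M$ is two-dimensional. In the frame $\{J\eta,\hat T\}$ the whole normal curvature is carried by the single operator $\hat R^{\perp}_{J\eta,\hat T}$, and the shape operators computed in the example above, fed into the Ricci equation of the sphere, show that $\langle \hat R^{\perp}_{J\eta,\hat T}\hat\xi,J\hat T\rangle=\langle A_\xi T,T\rangle$ for $\xi\in(\R JT)^{\perp}$ while $\langle \hat R^{\perp}_{J\eta,\hat T}\hat\xi,\hat\zeta\rangle=0$ for $\xi,\zeta\in(\R JT)^{\perp}$. Hence $\hat R^{\perp}_{J\eta,\hat T}$ is a rank-two rotation pairing the distinguished direction $J\hat T$ with the curvature normal $w=\sum_j\langle A_{\xi_j}T,T\rangle\,\hat\xi_j$, which is non-zero precisely because $M$ is not a holomorphic circle. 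By Ambrose--Singer the normal holonomy algebra is then spanned by the parallel transports of this single rank-two operator, and by the Normal Holonomy Theorem \cite{Ol90} it is an s-representation; fullness is what prevents a reduction of codimension and hence kills the flat factor, so the action has no non-zero fixed vector.

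The remaining and decisive step is to upgrade ``irreducible s-representation generated by rank-two curvature operators'' to ``transitive on the sphere''. Among s-representations the transitive ones on the unit sphere are exactly the isotropy representations of rank-one symmetric spaces, so what must be excluded are the higher-rank s-representations, whose principal orbits are proper submanifolds of the sphere. The rigidity that a two-dimensional base, carrying only one curvature direction, can generate only such a rank-one s-representation is precisely the content of \cite[Exercise 4.6.16, page 136]{BCO}, and I expect establishing this transitivity to be the main obstacle; once it is invoked, the restricted normal holonomy of $\hat M$ acts transitively on the unit sphere of $\nu_p\hat M$, and therefore so does the full group $\Hol_p(\hat M,\hat\nabla^{\perp})$.
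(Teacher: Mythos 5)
Your proposal is correct and follows essentially the same route as the paper, whose entire proof consists of combining the preceding non-flatness criterion with the citation of \cite[Exercise 4.6.16, page 136]{BCO}. Your explicit check that $\hat M$ is full in $S^{2n+1}$ (via $U(1)$-invariance of its linear span) is a detail the paper leaves implicit and is worth having; the only caveat is that your intermediate remark that fullness ``kills the flat factor'' is not by itself a complete argument (a parallel normal field does not automatically permit a reduction of codimension), but that is precisely the content supplied by the cited exercise, on which both you and the paper ultimately rely.
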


\subsection{Injection of the normal holonomy group}

At the light of the above example one can not expect to identify the normal holonomy of $M$ with that of its pull-back $\hat{M}$.
However, we will show that the holonomy group of $M$ injects into the normal holonomy group of $\hat{M}$.
We will need the following lemma.

\begin{lema}
Let $M$ be a  submanifold of a space form $\mathbb{S}_c$, with $c\neq 0$, and let $\hat{M}$ be its pullback to $\overline{N}_c$. Then $M$ is totally real if and only if the horizontal distribution $\hat{H}$ is a parallel distribution of $\hat{M}$.
\end{lema}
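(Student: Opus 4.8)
The plan is to work directly with the orthogonal splitting $T_p\hat{M}=\mathbb{R}J\eta\oplus\hat{H}_p$ and to verify the defining property of a parallel distribution, namely that $\hat{\nabla}_Z W$ is again a section of $\hat{H}$ whenever $W$ is. Since this condition is $C^\infty(\hat{M})$-linear in $Z$ and, by the Leibniz rule, need only be checked on a spanning family of sections $W$, it suffices to test it for $W=\hat{Y}$ (the horizontal lift of a tangent field $Y$ on $M$) with $Z$ ranging over the frame $\{J\eta,\hat{X}\}$ of $T\hat{M}$.

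First I would treat the horizontal-horizontal case $Z=\hat{X}$, $W=\hat{Y}$. Equation (\ref{lift3}) gives immediately $\hat{\nabla}_{\hat{X}}\hat{Y}=\widehat{\nabla_X Y}+\meti X,JY\metd J\eta$, whose first summand lies in $\hat{H}$ and whose second summand is vertical. Hence this derivative is horizontal for all $X,Y$ if and only if $\meti X,JY\metd=0$ for all tangent $X,Y$, which is precisely the statement that $JT_xM\perp T_xM$, i.e. that $M$ is totally real. This single computation already yields the implication ``$\hat{H}$ parallel $\Rightarrow M$ totally real'', since if $\hat{H}$ is parallel the vertical term above must vanish identically.

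Next I would handle the vertical-horizontal case $Z=J\eta$, $W=\hat{Y}$, which is where the only real care is needed. Using (\ref{lift2}) in $\overline{N}_c$ we have $\nabla'_{J\eta}\hat{Y}=\widehat{JY}$, so $\hat{\nabla}_{J\eta}\hat{Y}$ is the $T\hat{M}$-component of $\widehat{JY}$. Decomposing $JY=(JY)^{\top}+(JY)^{\bot}$ along $M$ and lifting, the tangential part $\widehat{(JY)^{\top}}$ lies in $\hat{H}$ while the normal part $\widehat{(JY)^{\bot}}$ lies in $\nu\hat{M}$ (recall that normal fields to $M$ lift to normal fields to $\hat{M}$). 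Thus $\hat{\nabla}_{J\eta}\hat{Y}=\widehat{(JY)^{\top}}\in\hat{H}$ unconditionally. The point to emphasize is that the vertical direction never forces a constraint: it contributes no normal-to-$\hat{M}$ component and therefore imposes nothing beyond what the horizontal case already gives.

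Combining the two cases yields the converse: if $M$ is totally real, then the horizontal case also produces a horizontal derivative, and together with the (always horizontal) vertical case this shows $\hat{\nabla}_Z W\in\hat{H}$ for every $Z$ tangent to $\hat{M}$ and every section $W$ of $\hat{H}$, i.e. $\hat{H}$ is parallel. The main, and essentially only, obstacle is the vertical-horizontal computation, where one must correctly identify $\nu\hat{M}$ with the horizontal lift of $\nu M$ in order to see that this direction is automatically tangent to $\hat{M}$ and horizontal; the horizontal-horizontal step is then a direct reading of (\ref{lift3}).
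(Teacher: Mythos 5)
Your proof is correct and follows essentially the same route as the paper: both directions come down to reading off the vertical component $\meti X,JY\metd J\eta$ in equation (\ref{lift3}). The only difference is in how the vertical derivative is handled --- the paper observes that $\hat{H}$ is autoparallel and that its orthogonal complement $\mathbb{R}J\eta$ is spanned by a geodesic field, hence $\hat{H}$ is parallel, whereas you verify directly via (\ref{lift2}) that $\hat{\nabla}_{J\eta}\hat{Y}=\widehat{(JY)^{\top}}$ is horizontal; the two arguments are equivalent and equally valid.
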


\begin{proof} The only if part was proved in \cite[Lemma 1.1]{NT}. For convenience of the reader we give here a proof.
From equation (\ref{lift3}) it is immediate to see that if $M$ is totally real then $\hat{H}$ is an autoparallel distribution and therefore parallel since the Hopf vector field is geodesic.

On the other hand, if $H$ is parallel, equation (\ref{lift3}) implies $\meti X, JY\metd=0$ for every $X,Y\in \mathfrak{X}(M)$ and so $M$ is totally real.
\end{proof}

\begin{thm}
Let $M$ be a totally real submanifold of a complex space form $\mathbb{S}_c$ with $c\neq 0$. Let $\hat{M}$ be its pullback to $\overline{N}_c$. Then the normal holonomy group $\Hol_p(M,\nabla^{\bot})$ is a subgroup of $\Hol_{\hat{p}}(\hat{M},\hat{\nabla}^{\bot})$, where $\hat{p}$ is any point of $\hat{M}$ such that $\pi_c(\hat{p})=p$.
\end{thm}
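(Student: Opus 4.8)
The plan is to realize each normal parallel transport of $M$ as one of $\hat M$ by lifting loops horizontally, the previous lemma (totally real $\Leftrightarrow$ $\hat H$ parallel) providing the crucial flatness. I would fix $\hat p$ with $\pi_c(\hat p)=p$ and use the isometry $d\pi_{\hat p}:\nu_{\hat p}\hat M\to\nu_p M$ to compare all transports inside $O(\nu_p M)$. The structural point is that, since $M$ is totally real, $\langle X,JY\rangle=0$ for $X,Y$ tangent to $M$; by (\ref{lift3}) this gives $[\hat X,\hat Y]=\widehat{[X,Y]}$, so $\hat H$ is integrable and the induced $U(1)$-connection on $\hat\pi:\hat M\to M$ is flat. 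This flatness is what controls the lifts of loops.

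Then I would use (\ref{lift4}), $\hat\nabla^\perp_{\hat X}\hat\xi=\widehat{\nabla^\perp_X\xi}$: along a horizontal curve the $\hat\nabla^\perp$-transport of a horizontal lift is the horizontal lift of the $\nabla^\perp$-transport. Hence for a loop $c$ based at $p$, its horizontal lift $\tilde c$ runs from $\hat p$ to $e^{i\theta_c}\hat p$, with $e^{i\theta_c}$ the bundle holonomy of $c$, and $\hat\tau^\perp_{\tilde c}$ corresponds through $d\pi$ at both endpoints exactly to $\tau^\perp_c$. Because the connection is flat over $M$, a null-homotopic $c$ has $e^{i\theta_c}=1$, so $\tilde c$ is already a loop at $\hat p$; this yields at once $\Hol^0_p(M,\nabla^\perp)\subseteq\Hol^0_{\hat p}(\hat M,\hat\nabla^\perp)$ and settles the restricted groups.

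For the full group the difficulty is a loop $c$ with nontrivial $e^{i\theta_c}$, whose lift $\tilde c$ does not close up. I would close it with the vertical fiber curve running from $e^{i\theta_c}\hat p$ back to $\hat p$ and read its normal transport off (\ref{lift5}), $\hat\nabla^\perp_{J\eta}\hat\xi=(\widehat{J\xi})^\perp$. Writing $\nu_p M=JT_pM\oplus W$ with $W$ the maximal $J$-invariant subspace, for $\xi\in JT_pM$ one has $J\xi$ tangent, so $(\widehat{J\xi})^\perp=0$ and the lift is vertically parallel, whereas for $\xi\in W$, $J\xi\in W$ is normal and one gets a genuine rotation; thus the vertical transport is $R_{\theta_c}:=\mathrm{id}\oplus e^{\theta_c J}$ on $\widehat{JT_pM}\oplus\widehat W$. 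The genuine loop obtained by following $\tilde c$ and then the fiber therefore has normal holonomy $R_{\theta_c}\circ\tau^\perp_c\in\Hol_{\hat p}(\hat M,\hat\nabla^\perp)$.

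The main obstacle is this residual rotation $R_{\theta_c}$. Since $\nabla^\perp$ need not preserve the splitting $JT_pM\oplus W$, the operator $\tau^\perp_c$ need not commute with $R_{\theta_c}$, so $c\mapsto R_{\theta_c}\tau^\perp_c$ is not a homomorphism and one cannot simply cancel the twist. To conclude I would show $R_{\theta_c}\in\Hol_{\hat p}(\hat M,\hat\nabla^\perp)$, whence $\tau^\perp_c=R_{\theta_c}^{-1}(R_{\theta_c}\tau^\perp_c)$ lies in the group and the inclusion follows. The route I expect to work is to apply the same closing-up to the powers $c^n$ (bundle-holonomy angles $n\theta_c$) and to use the compactness of the normal holonomy group: in the closure of $\{R_{n\theta_c}(\tau^\perp_c)^n\}$ one isolates the pure rotations $R_\theta$ by a recurrence argument. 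This compactness-and-recurrence step is the delicate part; everything else is bookkeeping with horizontal lifts and the transport formulas (\ref{lift4}) and (\ref{lift5}).
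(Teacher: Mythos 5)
Up to the treatment of the restricted group your argument is the paper's: the preceding lemma gives integrability of $\hat{H}$, equation (\ref{lift4}) makes $d\hat{\pi}$ intertwine normal parallel transport along horizontal curves, and null-homotopic loops lift horizontally to loops, so $\Hol^0_p(M,\nabla^{\perp})\subseteq\Hol^0_{\hat{p}}(\hat{M},\hat{\nabla}^{\perp})$. Where you diverge is instructive: the paper simply asserts that integrability of $\hat{H}$ forces the horizontal lift of \emph{every} loop to close up at $\hat{p}$, whereas you correctly observe that integrability only trivializes the holonomy of the flat $U(1)$-connection on null-homotopic loops, and that a general loop lifts to a path ending at $e^{i\theta_c}\hat{p}$ (for $\mathbb{R}P^n\subset\mathbb{C}P^n$ the lift of the generator of $\pi_1$ runs along half of the leaf $S^n$, from $\hat{p}$ to $-\hat{p}=e^{i\pi}\hat{p}$). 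Your computation of the vertical defect $R_{\theta_c}=\mathrm{id}\oplus e^{\theta_c J}$ from (\ref{lift5}) is also correct up to sign conventions.

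The genuine gap is in your last step. Knowing that $R_{n\theta_c}(\tau^{\perp}_c)^n\in\Hol_{\hat{p}}(\hat{M},\hat{\nabla}^{\perp})$ for all $n$ and passing to a convergent subsequence only places $R_{\theta_c}$ in the \emph{closure} of that group. The full normal holonomy group is a countable union of cosets of the restricted group and need not be closed (just as $\{e^{in\theta}\}_{n\in\mathbb{Z}}$ is dense but proper in $U(1)$ for $\theta/\pi$ irrational), so closure membership does not yield $\tau^{\perp}_c=R_{\theta_c}^{-1}\bigl(R_{\theta_c}\tau^{\perp}_c\bigr)\in\Hol_{\hat{p}}(\hat{M},\hat{\nabla}^{\perp})$. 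In addition, the compactness you invoke is not available when $c<0$: there $\hat{M}$ is a Lorentzian submanifold of $H^{n+1}_1$, and neither Olmos' theorem nor anything established in the paper gives compactness of its normal holonomy. What your argument rigorously establishes is therefore the inclusion of the \emph{restricted} groups --- which is also all that the paper's own one-line closing-up claim actually justifies; the full-group statement needs a further idea to dispose of the residual vertical rotation.
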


\begin{proof}
Let $\sigma$ be a loop in $M$ based at $p$ and let $\hat{\sigma}$ be its horizontal lift to $\hat{M}$ at $\hat{p}$. Since $\hat{H}$ is an integrable distribution, one gets that $\hat{\sigma}$ is also a loop in $\hat{M}$ based at $\hat{p}$.

Moreover $d\hat{\pi}$ defines an isometry between the normal spaces of $\hat{M}$ and $M$, which from equation (\ref{lift4})  preserves parallel transport along horizontal curves. This implies that the map $\Phi:\Hol_p(M,\nabla^{\bot})\to \Hol_{\hat{p}}(\hat{M},\hat{\nabla}^{\bot})$ given by $\Phi(\tau^{\bot}_\sigma)=\tau^{\bot}_{\hat{\sigma}}$ is an injective homomorphism.
\end{proof}

\subsection{Reduction of codimension}

\begin{thm} \label{reduccion}
Let $M$ be a totally real submanifod of a complex space form $\mathbb{S}^{m}_c$.
\begin{enumerate}
\item There exists a totally geodesic complex submanifold $\mathbb{S}^{n}_c$ of $\mathbb{S}^{m}_c$ such that $M\subset \mathbb{S}^{n}_c$ if and only if there exists a $\nabla^{\bot}$-parallel sub-bundle $W_0$ of $\nu M$ such that $TM\oplus W_0$ is $J$-invariant.

\noindent
    If in particular $JM$ is $\nabla^{\bot}$-parallel (i.e. $W_0=JM$), then $M$ is a Lagrangian submanifold of $\mathbb{S}^{n}_c$ (cf. \cite{CHL}).
\item There exists a totally geodesic totally real submanifold $N$ of $\mathbb{S}^{m}_c$ such that $M\subset N$ if and only if there exists a $\nabla^{\bot}$-parallel subbundle $W_0$ of $\nu M$ such that the first normal space $N^{1}$ of $M$ is contained in $W_0$ and $W_0\;\bot\;J(TM\oplus W_0)$.
\end{enumerate}
\end{thm}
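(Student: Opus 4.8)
The plan is to deduce both statements from the classical reduction-of-codimension theorem for submanifolds of space forms \cite{BCO}, combined with the fact that $\mathbb{S}^m_c$ is K\"ahler. Throughout set $E := TM\oplus W_0\subseteq T\mathbb{S}^m_{c\,|M}$. The reduction-of-codimension theorem states that if $TM\subseteq E$ and $E$ is $\overline{\nabla}$-parallel along $M$ (meaning $\overline{\nabla}_X s\in\Gamma(E)$ for all $X\in\Gamma(TM)$ and every section $s$ of $E$), then $M$ is contained in a totally geodesic submanifold whose tangent bundle along $M$ is $E$; conversely, the tangent bundle along $M$ of any totally geodesic submanifold containing $M$ is $\overline{\nabla}$-parallel, by autoparallelism. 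Since $\overline{\nabla}J=0$, parallel transport inside a totally geodesic submanifold commutes with $J$, so (for $M$ connected) such a totally geodesic submanifold is complex precisely when $E$ is $J$-invariant, and totally real precisely when $JE\perp E$; both conditions need only be checked along $M$ and then propagate. Thus each equivalence reduces to translating the hypotheses on $W_0$ into the $\overline{\nabla}$-parallelism of $E$ together with the relevant condition on $J$.

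For the two ``only if'' directions I would take $W_0$ to be the normal bundle of $M$ inside the given totally geodesic submanifold, so that $E=TM\oplus W_0$ is its tangent bundle along $M$ and is automatically $\overline{\nabla}$-parallel. Comparing $\overline{\nabla}_X\xi=-A_\xi X+\nabla^{\perp}_X\xi$ with the facts that $\overline{\nabla}_X\xi\in E$ (autoparallelism) and $A_\xi X\in TM$ gives $\nabla^{\perp}_X\xi\in W_0$, so $W_0$ is $\nabla^{\perp}$-parallel; likewise $\overline{\nabla}_X Y\in E$ for $X,Y\in\Gamma(TM)$ forces $\alpha(X,Y)\in W_0$, i.e. $N^1\subseteq W_0$. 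In case (1) the ambient submanifold is complex, so $E$ is $J$-invariant; in case (2) it is totally real, so $JE\perp E$, and in particular $W_0\perp J(TM\oplus W_0)$. These are exactly the stated conditions on $W_0$.

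For the ``if'' directions I would set $E=TM\oplus W_0$ and verify it is $\overline{\nabla}$-parallel. For $\xi\in\Gamma(W_0)$ this is immediate from $\overline{\nabla}_X\xi=-A_\xi X+\nabla^{\perp}_X\xi$, since $A_\xi X\in TM$ and $\nabla^{\perp}_X\xi\in W_0$ by the parallelism of $W_0$. The content is to show $\overline{\nabla}_X Y=\nabla_X Y+\alpha(X,Y)\in E$, that is $\alpha(X,Y)\in W_0$, equivalently $N^1\subseteq W_0$. In case (2) this is a hypothesis. In case (1) it must be extracted from the $J$-invariance of $E$: since $M$ is totally real, $J(TM)\perp TM$, and $J(TM)\subseteq JE=E$ forces $J(TM)\subseteq W_0$; then for $Y\in\Gamma(TM)$, writing $JY\in\Gamma(W_0)$ and using $\overline{\nabla}J=0$,
\[ J\nabla_X Y+J\alpha(X,Y)=\overline{\nabla}_X(JY)=-A_{JY}X+\nabla^{\perp}_X(JY)\in E, \]
and since $J\nabla_X Y\in J(TM)\subseteq W_0\subseteq E$ we obtain $J\alpha(X,Y)\in E$; as $E$ is $J$-invariant this gives $\alpha(X,Y)\in E\cap\nu M=W_0$. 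Hence $E$ is $\overline{\nabla}$-parallel in both cases and integrates to the desired totally geodesic submanifold, which is complex in case (1) by $J$-invariance of $E$, and totally real in case (2) because $W_0\perp J(TM\oplus W_0)$ together with $J(TM)\perp TM$ yields $JE\perp E$. For the Lagrangian addendum, taking $W_0=J(TM)=JM$ makes $E=TM\oplus J(TM)$ tautologically $J$-invariant and produces a complex totally geodesic $\mathbb{S}^n_c$ with $\nu_{\mathbb{S}^n_c}M=J(TM)$, which is exactly the Lagrangian condition for $M$ in $\mathbb{S}^n_c$ (cf. \cite{CHL}).

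The main obstacle is the single computation in case (1) that converts $J$-invariance of $E$ into the confinement $N^1\subseteq W_0$ of the first normal space; everything else is routine Gauss--Codazzi bookkeeping, an appeal to the standard reduction of codimension, and the propagation of the complex (resp. totally real) character via $\overline{\nabla}J=0$.
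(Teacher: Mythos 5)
Your computations are sound and run parallel to the paper's: both arguments reduce everything to showing that the first normal space is contained in $W_0$, and your derivation of this in case (1) from the $J$-invariance of $E=TM\oplus W_0$ (via $J(TM)\subseteq W_0$ and $\overline{\nabla}J=0$) is essentially the calculation the paper performs by decomposing $\alpha(X,Y)$ into its components along $J(TM)$, $W_1$ and $W_0^{\bot}$. The ``only if'' directions and the propagation of the complex or totally real character along the totally geodesic submanifold via $\overline{\nabla}J=0$ are also fine.

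The gap is in the codimension-reduction step itself. The theorem you quote --- $E\supseteq TM$ with $E$ $\overline{\nabla}$-parallel along $M$ implies that $M$ lies in a totally geodesic submanifold tangent to $E$ --- is the classical statement for \emph{real} space forms, and it is false in $\mathbb{C}P^n$ and $\mathbb{C}H^n$. A totally geodesic submanifold of a symmetric space tangent to a prescribed subspace exists only if that subspace is curvature-invariant, and formula (\ref{curvatura}) shows that a subspace of $T_p\mathbb{S}^m_c$ which is neither complex nor totally real need not be $\overline{R}^c$-invariant; for instance, a geodesic of $\mathbb{C}P^2$ together with a parallel rank-one $W_0$ making $TM\oplus W_0$ a plane that is neither complex nor totally real satisfies all your hypotheses with $N^1=0$, yet is tangent to no totally geodesic surface. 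Note also that $\overline{\nabla}$-parallelism of $E$ along $M$ only yields $\overline{R}^c(TM,TM)E\subseteq E$, not the full invariance $\overline{R}^c(E,E)E\subseteq E$ that is required. The paper avoids this by invoking the symmetric-space codimension reduction of \cite{DV}, whose hypotheses are precisely that $N^1\subseteq W_0$, that $W_0$ is $\nabla^{\bot}$-parallel and that $TM\oplus W_0$ is $\overline{R}^c$-invariant, and by checking the last condition from (\ref{curvatura}): in case (1) $E$ is $J$-invariant and in case (2) $E$ is totally real, and both kinds of subspaces are curvature-invariant. Your argument is repaired by inserting exactly this verification and citing the correct reduction theorem.
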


\begin{proof}
Let $M$ be a totally real submanifold of a complex space form $\mathbb{S}_c$.  By the result in \cite{DV} we know that if the first normal space $\mathrm{N}^1=\alpha(TM\times TM)$ is contained in a $\nabla^{\bot}$-parallel sub-bundle $W$ of $\nu M$ such that $\mathrm{V}:=TM\oplus W$ is $\overline{R}^{c}$-invariant, then $M$ is contained in a totally geodesic submanifold $N$ of $\mathbb{S}_c$ of dimension equal to $\mathrm{rank}(\mathrm{V})$.
We are going to show that this is indeed the case in both items.\\

Case (1).
Assume that there is a $\nabla^{\bot}$-parallel sub-bundle $W_0$ of $\nu M$ such that $\mathrm{V}:=TM\oplus W_0$ is $J$-invariant. From equation (\ref{curvatura}) one can easily see that $\mathrm{V}$ is $\overline{R}^c$-invariant.

Since $\mathrm{V}$ is $J$-invariant and $M$ is totally real, one has that $$W_0=J(TM)\oplus W_1$$ and $W_1$ is $J$-invariant. Let $W_2=(W_0)^{\bot} \subset \nu M$. So the normal bundle of $M$ decomposes as $\nu M=J(TM)\oplus W_1\oplus W_2$. Given two tangent vectors $X,Y$ to $M$, set $\alpha(X,Y)=\xi+\xi_1+\xi_2$, where $\xi \in J(TM)$, $\xi_1 \in W_1$ and $\xi_2 \in W_2$.
Then one has
 \begin{equation}
 \overline{\nabla}_X JY=J\overline{\nabla}_X Y=J \nabla_X Y+ J \xi + J\xi_1+J\xi_2 \, \, .
 \label{nabla1}
 \end{equation}
 On the other hand, $\overline{\nabla}_X JY=-A_{JY}X+\nabla^{\bot}_X JY$. Comparing with the normal part in (\ref{nabla1}) we get  \[ \nabla^{\bot}_X JY = J \nabla_X Y+ J\xi_1+J\xi_2 \, \, .\]
 Since $J(TM)\subset W_0$ and $W_0$ is $\nabla^{\bot}$-parallel we get $$J\xi_2= \nabla^{\bot}_X JY-J\xi_1 - J\nabla_X Y\in W_0$$
Since $W_2$ is also $J$-invariant, we get $J\xi_2=0$ and so $\alpha(X,Y)\in W_0$.

Therefore the first normal space is contained in $W_0$ and $M$ is contained in a totally geodesic submanifold $N$ of $\mathbb{S}_c$ whose tangent bundle (along $M$) is $TM\oplus W_0=\mathrm{V}$. Since $V$ is $J$-invariant it follows that $N$ is a complex totally geodesic submanifold hence a complex space form $\mathbb{S}^n_c$.

The converse is immediate, taking $W_0=\nu M\cap T\mathbb{S}_c^{n}$.\\

Case (2).
Assume now that  there exists a $\nabla^{\bot}$-parallel sub-bundle $W_0$ of $\nu M$ such that  $N_1\subset W_0$ and $W_0\;\bot\; J(TM\oplus W_0)$. Then it is not difficult to see, from equation (\ref{curvatura}) that $\mathrm{V}:= TM\oplus W_0$ is $\overline{R}^c$-invariant. Since $W_0$ contains the first normal space of $M$, there is a totally geodesic submanifold $N$ of $\mathbb{S}^{n}$ containing $M$ whose tangent bundle along $M$ is $TM\oplus W_0$. This implies that $N$ is totally real.
Conversely, assume that $M$ is contained in a  totally geodesic totally real submanifold $N$ of $\mathbb{S}_c^{m}$. Then $W_0:= \nu M\cap TN$ satisfies the conditions of the statement.
\end{proof}

\begin{remark}
\label{remarkcompleja}
Let $M$ be a non-full totally real submanifold of a complex space form $\mathbb{S}_c^{m}$ contained in a complex space form $\mathbb{S}_c^{n}\subset \mathbb{S}_c^{m}$. Then $\left(\nu \mathbb{S}_c^{m}\right)_{|M}$ is contained in $\nu_0(M)$, the largest parallel and flat sub-bundle of $\nu M$.

In fact $\left(\nu \mathbb{S}_c^{n}\right)_{|M}$ is $\nabla^{\bot}$-parallel. So, to prove the inclusion $\left(\nu \mathbb{S}_c^{m}\right)_{|M}\subset\nu_0(M)$, it is enough to see that $R^{\bot}_{X,Y}\xi=0$ for every $\xi\in \left(\nu \mathbb{S}_c^{m}\right)_{|M}$. To see this, observe that the first normal space of $M$ is contained in  $T\mathbb{S}_c^{n}$ and so if $\xi,\; \zeta\in \left(\nu \mathbb{S}_c^{m}\right)_{|M}$ then $A_{\xi}=A_{\zeta}=0$. Hence from the Ricci equation (\ref{ricci}) we have $$\meti R^{\bot}_{X,Y}\xi,\zeta\metd=\meti \overline{R}^{c}_{X,Y}\xi,\zeta\metd=0$$
for every $X,\; Y\in \mathfrak{X}(M)$.
\end{remark}

From Theorem \ref{lagrangian}, Theorem \ref{reduccion} and Remark \ref{remarkcompleja} one immediately gets the following

\begin{cor}
Let $M$ be a totally real submanifold of a complex space form $\mathbb{S}_c^{n}$. If $J(TM)$ is a $\nabla^{\bot}$-parallel sub-bundle of $\nu M$, then the restricted normal holonomy group $\Hol^0(M,\nabla^{\bot})$ acts on each normal space as the holonomy representation of a symmetric space  (i.e. a flat factor plus an s-representation).
\end{cor}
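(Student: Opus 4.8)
The plan is to use the ``in particular'' part of Theorem \ref{reduccion}(1) to realize $M$ as a Lagrangian submanifold of a totally geodesic complex subspace form, and then to combine the Lagrangian result (Theorem \ref{lagrangian}) with the flatness supplied by Remark \ref{remarkcompleja} on the complementary portion of the normal bundle.

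First I would apply Theorem \ref{reduccion}(1) with the parallel sub-bundle $W_0=J(TM)$. The bundle $TM\oplus J(TM)$ is $J$-invariant, since $J^2=-\mathrm{id}$ carries $J(TM)$ back onto $TM$, so the hypothesis of Case (1) is satisfied. The ``in particular'' clause then produces a totally geodesic complex submanifold $\mathbb{S}_c^{r}\subset \mathbb{S}_c^{n}$ with $M\subset \mathbb{S}_c^{r}$ and $M$ Lagrangian in $\mathbb{S}_c^{r}$. In particular the normal bundle of $M$ inside $\mathbb{S}_c^{r}$ is exactly $J(TM)$.

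Next I would split the full normal bundle as the $\nabla^{\bot}$-orthogonal sum $\nu M = J(TM)\oplus (\nu\mathbb{S}_c^{r})_{|M}$. Both summands are $\nabla^{\bot}$-parallel: $J(TM)$ by hypothesis, and $(\nu\mathbb{S}_c^{r})_{|M}$ because $\mathbb{S}_c^{r}$ is totally geodesic. Hence $\Hol^0(M,\nabla^{\bot})$ preserves this decomposition and acts as the product of its restrictions to the two factors. On $J(TM)$, total geodesy of $\mathbb{S}_c^{r}$ ensures that the normal connection of $M$ inside $\mathbb{S}_c^{r}$ coincides with the restriction of $\nabla^{\bot}$; thus the action on this factor is the restricted normal holonomy of the Lagrangian submanifold $M\subset \mathbb{S}_c^{r}$, which by Theorem \ref{lagrangian} is the holonomy representation of a Riemannian symmetric space. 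On the complementary factor, Remark \ref{remarkcompleja} gives $(\nu\mathbb{S}_c^{r})_{|M}\subset \nu_0(M)$, the maximal parallel flat sub-bundle, so $\Hol^0(M,\nabla^{\bot})$ acts trivially there, contributing a flat factor.

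Combining the two pieces, $\Hol^0(M,\nabla^{\bot})$ acts on $\nu M$ as a flat factor together with a holonomy representation of a symmetric space, which is again the holonomy representation of a Riemannian symmetric space; this is the assertion of the corollary. The only point requiring care is the identification of the normal connection of $M$ in $\mathbb{S}_c^{r}$ with $\nabla^{\bot}|_{J(TM)}$: this is precisely where total geodesy of $\mathbb{S}_c^{r}$ enters, guaranteeing that the normal connection of the chain $M\subset \mathbb{S}_c^{r}\subset \mathbb{S}_c^{n}$ splits along the parallel decomposition rather than coupling the two factors.
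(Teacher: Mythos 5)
Your proposal is correct and follows exactly the route the paper intends: the paper derives this corollary "immediately" from Theorem \ref{reduccion}(1) (with $W_0=J(TM)$, yielding $M$ Lagrangian in a totally geodesic $\mathbb{S}_c^{r}$), Theorem \ref{lagrangian} on the factor $J(TM)$, and Remark \ref{remarkcompleja} for the flat action on $(\nu\mathbb{S}_c^{r})_{|M}$. Your write-up simply makes explicit the details the paper leaves to the reader.
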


Now we compute the normal holonomy group $\Hol(M,\nabla^{\perp})$ of a totally real submanifold
$M\subset N\subset \mathbb{S}_c^{m}$, where $N$ is a totally geodesic totally real submanifold of $\mathbb{S}_c^{m}$, i.e., $N$ is a real projective space in the case $c>0$ and $N$ is a real hyperbolic space in the case $c <0$.

\begin{thm}
\label{reduccionreal}
Let $M$ be a totally real submanifold of a complex space form $\mathbb{S}_c^{m}$ contained in a totally real, totally geodesic submanifold $N$ of $\mathbb{S}_c^{m}$, with $dim(M)\geq 2$.   The normal bundle $\nu M$ decomposes as the sum of the $\nabla^{\bot}$-parallel subbundles $$\nu M=\nu_N M\oplus \nu N_{|M},$$
where $\nu_N M$ is the normal bundle of $M$ as a submanifold of $N$. Then

\begin{enumerate}

\item  The restricted normal holonomy group acts on $\nu_{N}M $ as the holonomy representation of a symmetric space.

\item The parallel subbundle $\nu N_{|M}$ splits as
\[ \nu N_{|M} = W \oplus W^{\perp}\]
where $W$ is the smallest $\nabla^{\perp}$-parallel subbundle containing $J (TM)$. The group $\Hol(M,\nabla^{\perp})$ acts trivially on $W^{\perp}$ and it is the full orthogonal group on $W$ i.e. $\mathfrak{hol}(M,\nabla^{\perp})_{|W} = \mathfrak{so}(W)$.

\item $W = J(TM)$ if and only if $M$ is a totally geodesic submanifold of $\mathbb{S}_c^{m}$.

\end{enumerate}

\end{thm}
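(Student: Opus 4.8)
The plan is to pull the whole problem back to the real space form $N$ and to the \emph{tangent} bundle of $N$ restricted to $M$, where the curvature is completely explicit. Since $N$ is a totally real, totally geodesic submanifold of maximal dimension it is Lagrangian, so $J(TN)=\nu N$ and $\left(T\mathbb{S}_c^{m}\right)_{|N}=TN\oplus J(TN)$; restricting to $M$ gives $\nu M=\nu_N M\oplus \nu N_{|M}$ with $\nu N_{|M}=J(TM)\oplus J(\nu_N M)$. First I would check that both summands are $\nabla^{\perp}$-parallel: the bundle $\nu N_{|M}$ is parallel because $N$ is totally geodesic (for $\xi$ normal to $N$ and $X$ tangent to $M$ the Weingarten formula gives $\overline{\nabla}_X\xi\in\nu N$, with no $TM$-component), and $\nu_N M$ is parallel as its orthogonal complement. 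On $\nu_N M$ the connection $\nabla^{\perp}$ coincides with the normal connection of $M$ viewed as a submanifold of the real space form $N$; hence part (1) follows at once from Olmos' Normal Holonomy Theorem \cite{Ol90} applied to $M\subset N$.

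For part (2) the key observation is that, since $J$ is $\overline{\nabla}$-parallel and $N$ is totally geodesic, the bundle isometry $J\colon TN_{|M}\to \nu N_{|M}$ intertwines the restriction of the Levi-Civita connection $\nabla^{N}$ of $N$ with the restriction of $\nabla^{\perp}$ to $\nu N_{|M}$; indeed $\overline{\nabla}_X(JZ)=J\overline{\nabla}_X Z=J\nabla^{N}_X Z$ for $Z$ tangent to $N$ along $M$. Thus computing $\Hol(M,\nabla^{\perp})$ on $\nu N_{|M}$ is the same as computing the holonomy of $\nabla^{N}$ on $TN_{|M}$ along loops of $M$, and $W=J\tilde W$ where $\tilde W$ is the smallest $\nabla^{N}$-parallel subbundle of $TN_{|M}$ containing $TM$. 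Because $N$ has constant curvature $\kappa$, the curvature of this induced connection is $R^{N}_{X,Y}=\kappa\,(X\wedge Y)$ with $X,Y$ tangent to $M$, whose image lies in $TM\subset\tilde W$. Hence every curvature operator, and by Ambrose--Singer every holonomy transformation, lies in $\mathfrak{so}(\tilde W)$ and kills $\tilde W^{\perp}$; this yields the inclusion $\mathfrak{hol}(M,\nabla^{\perp})_{|W}\subset\mathfrak{so}(W)$ together with the triviality of the action on $W^{\perp}$.

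The heart of the matter, and the step I expect to be hardest, is the reverse inclusion $\mathfrak{hol}(M,\nabla^{\perp})_{|W}=\mathfrak{so}(W)$. My plan is to use that the holonomy algebra is generated by the curvature and its successive covariant derivatives. From $R^{N}_{X,Y}=\kappa\,X\wedge Y$ one computes $(\nabla_W R)(X,Y)=\kappa\bigl(\alpha(W,X)\wedge Y+X\wedge\alpha(W,Y)\bigr)$, where $\alpha$ is the second fundamental form of $M$ in $N$; these produce the wedges between $TM$ and the first normal space, and the higher derivatives produce the wedges involving the higher osculating spaces, whose sum is exactly $\tilde W$. The problem then becomes purely algebraic: to show that these elements, together with $\mathfrak{so}(TM)$, generate all of $\Lambda^{2}\tilde W=\mathfrak{so}(\tilde W)$ as a Lie algebra. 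Equivalently, I would first prove that $\Hol$ acts \emph{irreducibly} on $\tilde W$ — a proper parallel subbundle would, because of the special form of $\kappa\,X\wedge Y$, force $TM$ into a proper parallel subbundle and contradict the minimality of $\tilde W$ — and then upgrade irreducibility to the full orthogonal algebra by a Simons-type argument, the holonomy system carrying the non-symmetric, constant-curvature tensor $\kappa\,X\wedge Y$. The genuinely delicate point is controlling how the osculating filtration fills up under Lie brackets, since the generating subspaces must overlap (disjoint pieces would only give a block-diagonal subalgebra).

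Finally, part (3) comes out as a by-product: $W=J(TM)$ means $\tilde W=TM$, i.e.\ $TM$ is $\nabla^{N}$-parallel, which holds if and only if the second fundamental form of $M$ in $N$ vanishes, that is, $M$ is totally geodesic in $N$; since $N$ is itself totally geodesic in $\mathbb{S}_c^{m}$, this is equivalent to $M$ being totally geodesic in $\mathbb{S}_c^{m}$.
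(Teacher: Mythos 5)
Your proposal follows essentially the same route as the paper: the same parallel splitting, Olmos' theorem on $\nu_N M$, and the explicit constant-curvature expression for $R^{\perp}$ on $\nu N_{|M}$ (your conjugation by $J$ back to $TN_{|M}$ is only cosmetic, since $J$ intertwines the two connections exactly as you say). The step you flag as delicate is closed precisely by the first plan you sketch, with no need for the irreducibility/Simons detour: the derivatives $\nabla^{j}R$ put all wedges of $TM$ with the osculating subbundles $E_i$ into the holonomy algebra, and the missing wedges $E_i\wedge E_j$ are then obtained from Lie brackets (e.g. $[X\wedge\xi,\,X\wedge\zeta]=-\xi\wedge\zeta$ for $X$ a unit tangent vector and $\xi,\zeta\perp TM$), so $\Lambda^{2}\tilde W=\mathfrak{so}(\tilde W)$ is generated directly — this is exactly the paper's argument.
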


\begin{proof} The parallel splitting $\nu(M) = \nu_{N }M \oplus \nu N_{|M}$ follows from the fact that $N$ is a totally geodesic submanifold of $M$. Since $N$ is also totally real, it is a real space form and so $\Hol^0(M,\nabla^{\perp})$ acts on $\nu_{N}M $ as the holonomy representation of a symmetric space as follows from Olmos' Theorem \cite{Ol90}. This proves (1).\\

For (2), let $E_1 := \nabla^{\perp} J(TM)$ be the subbundle of $\nu(N)_{|M}$ obtained by taking derivatives of sections of $J(TM)$. By taking further derivatives we get the subbundles $E_j := \underbrace{\nabla^{\perp} \cdots \nabla^{\perp}}_{j-times} J(TM) $. So  the smallest $\nabla^{\perp}$-parallel subbundle of $\nu N_{|M}$  containing $J (TM)$ is \[ W = J(TM) + E_1 + E_2 + E_3 + \cdots \, . \]
To prove that $\mathfrak{hol}(M,\nabla^{\perp})_{|W} = \mathfrak{so}(W)$ notice that the curvature tensor of $\nabla^{\perp}$ on $\nu N_{|M}$  is $$R^{\perp}_{X,Y} \xi = \frac{1}{4}c (JX \wedge JY ) (\xi) .$$
This immediately implies that $W^{\perp}$ is flat i.e. the action of $\Hol(M,\nabla^{\perp})$ is trivial and that
$\Lambda^2 J(TM) \subset \mathfrak{hol}(M,\nabla^{\perp})_{|W}$.
As it is well-known the covariant derivatives $\nabla^{j} R^{\perp}$ also belong to the holonomy algebra $\mathfrak{hol}(M,\nabla^{\perp})_{|W}$.
As consequence we get that $J(TM) \wedge E_i$ and $E_i \wedge E_j$ are both contained in $\mathfrak{hol}(M,\nabla^{\perp})_{|W}$. Thus, $\Lambda^2 W$ is contained in the Lie algebra $\mathfrak{hol}(M,\nabla^{\perp})_{|W}$ which proves that $\mathfrak{hol}(M,\nabla^{\perp})_{|W} = \mathfrak{so}(W)$. This proves (2).\\

To prove (3) observe that $W=J(TM)$ if and only if $J(TM)$ is $\nabla^{\bot}$-parallel.
Take $X,Y\in \mathfrak{X}(M)$. Then comparing the normal parts of $\overline{\nabla}_X JY=J\overline{\nabla}_X Y$ one gets $$J\nabla_X Y+J\alpha(X,Y)=\nabla^{\bot}_X JY$$
since $\alpha(TM\times TM)\cap J(TM)=\{0\}$. Therefore $J(TM)$ is parallel if and only if $J\alpha(X,Y)=0$, i.e. $M$ is totally geodesic. This proves (3) and completes the proof of the theorem.
\end{proof}

One of the consequences of Olmos' holonomy theorem is the compactness of the restricted normal holonomy group of a submanifold of a real space form. In general there are no reasons to expect the compactness of the normal holonomy group for submanifolds of a Riemannian space even in the case of submanifolds of symmetric spaces (e.g. \cite[Theorem 10, (b,i)]{AD}).

For a totally real submanifold contained in a totally real totally geodesic submanifold of a complex space form $\mathbb{S}_c^{m}$ the following theorem shows that the normal holonomy group is indeed compact.

We will need the following lemma which is a standard consequence of \cite[Proposition 6.6., page 122]{Helgason}.
\begin{lema}\label{Helgason} Let $K$ be a compact connected Lie group and let $N \lhd K$ be a normal subgroup of $K$.
If the center of $K$ is contained in $N$, then $N$ is closed, and  hence compact in $K$.
\end{lema}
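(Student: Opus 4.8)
The plan is to reduce the assertion to the classical fact that an adjoint compact simple Lie group is simple as an \emph{abstract} group. Write $Z=Z(K)$ for the center and let $q\colon K\to K_{\mathrm{ad}}:=K/Z$ be the projection onto the adjoint group. Since $K$ is compact and connected, $K_{\mathrm{ad}}$ is compact, connected, semisimple and has trivial center. The hypothesis enters exactly here: $\ker q=Z\subseteq N$, and this single inclusion is what will force $N$ to be saturated over $K_{\mathrm{ad}}$.

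The structural input I would quote from \cite[Proposition 6.6., page 122]{Helgason} is that $K_{\mathrm{ad}}$ decomposes as a \emph{direct} product $K_{\mathrm{ad}}=L_1\times\cdots\times L_r$ of compact, connected, centerless simple Lie groups (the analytic subgroups attached to the simple ideals of its Lie algebra are closed, and pairwise they meet only in the trivial group because the center is trivial). Each $L_i$ is nonabelian and, being an adjoint compact simple group, is simple as an abstract group. I would then invoke the elementary group-theoretic fact that every normal subgroup of a finite direct product of nonabelian abstractly simple groups is a sub-product $\prod_{i\in I}L_i$ for some $I\subseteq\{1,\dots,r\}$: the projection of a normal subgroup to each factor is normal, hence trivial or all of $L_i$, and a one-line commutator computation (using $[L_i,L_i]=L_i$) shows that an onto $i$-th projection forces the entire factor $L_i$ into the subgroup.

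Applying this to $q(N)$, which is normal in $K_{\mathrm{ad}}$ because $q$ is surjective and $N\lhd K$, gives $q(N)=\prod_{i\in I}L_i$; in particular $q(N)$ is closed. Now the inclusion $Z=\ker q\subseteq N$ yields $N=q^{-1}(q(N))$: if $q(x)\in q(N)$, then $q(x)=q(n)$ for some $n\in N$, so $xn^{-1}\in\ker q\subseteq N$ and hence $x\in N$. Thus $N$ is the preimage under the continuous map $q$ of a closed set, so it is closed; and since $K$ is compact, $N$ is compact.

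The one genuinely non-formal step is the abstract simplicity of the factors $L_i$. This is precisely what excludes the ``irrational winding'' that makes a general normal subgroup fail to be closed, and it is available only because $K_{\mathrm{ad}}$ is centerless --- which in turn is guaranteed by Helgason's decomposition together with the hypothesis $Z(K)\subseteq N$. Everything else (the sub-product description of normal subgroups of a product of simple groups and the saturation identity $N=q^{-1}(q(N))$) is elementary.
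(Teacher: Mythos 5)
Your proof is correct. The paper offers no written argument for this lemma beyond the citation of Helgason's Proposition 6.6 (the decomposition of a semisimple Lie algebra into its simple ideals), and your derivation --- passing to the centerless adjoint quotient $q\colon K\to K/Z(K)$, writing that quotient as a direct product of abstractly simple adjoint factors so that the normal subgroup $q(N)$ is a closed sub-product, and then recovering $N=q^{-1}(q(N))$ from the hypothesis $Z(K)=\ker q\subseteq N$ --- is precisely the standard deduction the authors are invoking, with the one genuinely non-formal ingredient (abstract simplicity of a centerless compact simple Lie group) correctly identified and correctly used.
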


\begin{thm}
\label{thm:compactness}
Let $M$ be a totally real submanifold of a complex space form $\mathbb{S}_c^{m}$ contained in a totally real, totally geodesic submanifold $N$ of $\mathbb{S}_c^{m}$. Then the restricted normal holonomy group $\Hol^{0}_p(M,\nabla^{\bot})$ at $p \in M$ is compact.
\end{thm}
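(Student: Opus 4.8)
The plan is to use Theorem \ref{reduccionreal} to reduce the statement to a purely group-theoretic fact about connected subgroups of compact Lie groups. Recall the parallel splitting $\nu M = \nu_N M \oplus W \oplus W^{\perp}$ and the description of the action given there. Restriction to these three parallel subbundles yields an injective homomorphism
\[ \Hol^{0}_p(M,\nabla^{\perp}) \hookrightarrow SO((\nu_N M)_p)\times SO(W_p)\times SO((W^{\perp})_p), \]
injective because the action on $(W^{\perp})_p$ is trivial, so each element is determined by its restriction to $(\nu_N M)_p\oplus W_p$. By Theorem \ref{reduccionreal} the image acts trivially on $(W^{\perp})_p$, as all of $SO(W_p)$ on $W_p$, and on $(\nu_N M)_p$ as the restricted normal holonomy of $M$ as a submanifold of the real space form $N$, which is \emph{compact} by Olmos' theorem. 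Denoting this last compact group by $H_1$, we conclude that $\Hol^{0}_p(M,\nabla^{\perp})$ is (isomorphic to) a connected Lie subgroup $G$ of the compact group $H_1\times SO(W_p)$, and the whole content of the theorem becomes: this connected subgroup is closed.

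I would first record the easy structural facts. Write $\mathfrak g = \mathfrak{hol}(M,\nabla^{\perp})_p = \mathfrak s \oplus \mathfrak z$ with $\mathfrak s$ semisimple and $\mathfrak z$ its center. The analytic subgroup $S$ with Lie algebra $\mathfrak s$ is automatically closed: a connected subgroup of a compact group whose Lie algebra is of compact type and semisimple is the image of the corresponding (compact) simply connected group under a continuous homomorphism, hence compact. Thus $G = S\cdot\exp(\mathfrak z)$, and the only possible failure of closedness comes from the central toral factor $\exp(\mathfrak z)$; ruling out an irrational winding of this torus against $SO(W)$ is exactly the main obstacle.

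To deal with it I would pass to the closure $K := \overline{G}$ inside the compact group $H_1\times SO(W_p)$; it is compact and connected, and since $K$ normalizes $\mathfrak g$ (the normalizer being closed and containing $G$) it normalizes $G$, so $G\lhd K$. By Lemma \ref{Helgason} it then suffices to prove $Z(K)\subseteq G$. Here the key input is that $G$ projects \emph{onto} the whole factor $SO(W_p)$, i.e. the projection of $\mathfrak g$ onto $\mathfrak{so}(W_p)$ is surjective. Assume first $\dim W\ge 3$, so that $\mathfrak{so}(W_p)$ is semisimple and acts irreducibly on $W_p$. Any $c\in\mathfrak z(\mathfrak k)$ commutes with $\mathfrak g$, hence its action on $W_p$ commutes with all of $\mathfrak{so}(W_p)$; since $c$ also acts trivially on $(W^{\perp})_p$, Schur's lemma forces $c|_{W_p}$ to be a skew scalar, that is, $c|_{W_p}=0$. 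Therefore the central torus of $K$ is supported on $(\nu_N M)_p$, where the projection of $G$ is the compact group $H_1$; a short argument using the compactness of $H_1$ (so that a torus supported there and lying in $\overline{G}$ already lies in $G$) then gives $Z(K)\subseteq G$, and Lemma \ref{Helgason} shows $G$ is closed, hence compact.

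Finally I would dispose of the degenerate case $\dim W=2$. Since $\dim M\ge 2$ forces $\dim W\ge \dim J(TM)=\dim M\ge 2$, in this case $W=J(TM)$, so by part (3) of Theorem \ref{reduccionreal} the submanifold $M$ is totally geodesic. Then $M$ is totally geodesic in the real space form $N$, its normal bundle $\nu_N M$ is flat, and $\Hol^{0}_p(M,\nabla^{\perp})$ reduces to $SO(W_p)=SO(2)$, which is compact. Thus the obstacle is concentrated entirely in the third paragraph: the semisimple part, the two genuinely compact building blocks, and the degenerate case are routine, whereas controlling the central torus — precisely the point where the Schur/irreducibility argument together with Lemma \ref{Helgason} is needed — is the heart of the proof.
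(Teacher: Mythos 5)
Your setup is sound and runs parallel to the paper's: the same splitting $\nu M=\nu_N M\oplus W\oplus W^{\perp}$ from Theorem \ref{reduccionreal}, the same two compact building blocks $H_1=\Hol^0_p(M\subset N,\nabla^{\bot})$ (compact by Olmos) and $SO(W_p)$, and the same Lemma \ref{Helgason}. The embedding of $G:=\Hol^0_p(M,\nabla^{\bot})$ into $H_1\times SO(W_p)$ with both projections surjective is correct, the semisimple part is indeed automatically closed, the reduction to $Z(\overline{G})\subseteq G$ is a legitimate use of Lemma \ref{Helgason}, and your Schur argument showing that $\mathfrak{z}(\mathrm{Lie}(\overline{G}))$ acts trivially on $W_p$ when $\dim W\geq 3$ is correct. (You should also dispose of $\dim M=1$, which Theorem \ref{thm:compactness} does not exclude; there the holonomy is trivial.)

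The gap sits exactly where you place the heart of the proof, and the hint you give there would not close it. Knowing that the central torus of $\overline{G}$ lies in $H_1\times\{1\}$ does not imply it lies in $G$, and compactness of $H_1$ is not the operative ingredient: for an irrational line $G$ in $SO(2)\times SO(2)$ both factors are compact, both projections are surjective, $\overline{G}$ is the whole torus, yet the subtorus $SO(2)\times\{1\}\subseteq\overline{G}$ meets $G$ only in the identity (in general $\overline{G}\cap(H_1\times\{1\})$ is strictly larger than $\overline{G\cap(H_1\times\{1\})}$). What saves the day is the semisimplicity of $SO(W_p)$ combined with the surjectivity of both projections, i.e.\ a Goursat-type argument: with $N_1:=G\cap(H_1\times\{1\})$, which is normal in $G$ so that $\mathrm{pr}_1(N_1)$ is normal in $\mathrm{pr}_1(G)=H_1$, the map $\phi:G\to SO(W_p)$ induces a surjection $SO(W_p)\to H_1/\mathrm{pr}_1(N_1)$; for $\dim W\geq 3$ the target is therefore semisimple, so $\mathfrak{z}(\mathfrak{h}_1)\subseteq\mathrm{Lie}(N_1)\subseteq\mathfrak{g}$, which together with your Schur step gives $\mathfrak{z}(\mathfrak{k})\subseteq\mathfrak{g}$, hence $\mathfrak{k}=\mathfrak{g}$ and $G=\overline{G}$. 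This is precisely Claim 2 of the paper's proof (the morphism $\rho:SO(W)\to\Hol^{0}_p(M\subset N,\nabla^{\bot})/\psi(\mathrm{Ker}(\phi))$), so the one genuinely non-routine idea of the argument is the one your sketch asserts but does not supply.
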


\begin{proof}
According to the decomposition $\nu M=\nu_N M\oplus W\oplus W^{\bot}$, any element $\tau$ of  $\Hol^{0}_p(M,\nabla^{\bot})$ has the block diagonal form
$$\tau=\left( \begin{array}{ccc}
A & 0 & 0\\
0 & B & 0\\
0 & 0 & 1 \\
\end{array}\right) \, \, .$$

By item $(2)$ of Theorem \ref{reduccionreal} the map  $\phi: \Hol^{0}_p(M,\nabla^{\bot}) \to SO(W)$ defined by $\phi(\tau) = B$ gives rise to the following short exact sequence of groups
\begin{equation}\label{shortSequence} 0 \rightarrow \mathrm{Ker}(\phi) \to  \Hol^{0}_p(M,\nabla^{\bot}) \to SO(W) \to 0 \end{equation}

Then to show that $\Hol^{0}_p(M,\nabla^{\bot})$ is compact it is enough to show that $\mathrm{Ker}(\phi)$ is compact.
By definition we have that $\tau \in \mathrm{Ker}(\phi)$ if and only if \[ \tau =\left( \begin{array}{ccc}
A & 0 & 0\\
0 & 1 & 0\\
0 & 0 & 1 \\
\end{array}\right) \]
and so we have an injective map $\psi: Ker(\phi) \to \Hol^{0}_p(M \subset N,\nabla^{\bot})$, where $\Hol^{0}_p(M \subset N,\nabla^{\bot})$ is the normal holonomy group of $M$ regarded as a submanifold of $N$.\\

{\bf Claim 1:} The image $\psi(Ker(\phi))$ is a normal subgroup of $\Hol^{0}_p(M \subset N,\nabla^{\bot})$.\\

 Indeed, any element $\mathbf{x}$ of $\Hol^{0}_p(M \subset N,\nabla^{\bot})$ is determined by a null-homotopic loop $\gamma$ in $M$ based at $p$. Then the parallel transport $\tau_{\gamma} \in \Hol^{0}_p(M,\nabla^{\bot})$ has the matrix
\[ \tau_{\gamma} =\left( \begin{array}{ccc}
\mathbf{x} & 0 & 0\\
0 & B & 0\\
0 & 0 & 1 \\
\end{array}\right) \]
Then that $\mathrm{Ker}(\phi)$ is a normal subgroup of $\Hol^{0}_p(M,\nabla^{\bot})$ follows from a direct computation with the diagonal block decomposition.\\

{\bf Claim 2:} The center of $\Hol^{0}_p(M \subset N,\nabla^{\bot})$ is contained in $\psi(Ker(\phi))$.\\

Indeed, the short sequence (\ref{shortSequence}) induces a morphism \[ \rho : SO(W) \to \Hol^{0}_p(M \subset N,\nabla^{\bot})/ \psi(\mathrm{Ker}(\phi)) \, .\]Observe that if $\mathrm{dim}(W) > 2$ then $SO(W)$ is a semisimple Lie group hence the center of $\Hol^{0}_p(M \subset N,\nabla^{\bot})$ must be contained in $\psi(\mathrm{Ker}(\phi))$.
If $\mathrm{dim}(W) =1$ then $M$ is a curve so the claim is trivial. If  $\mathrm{dim}(W) = 2$ then $M$ is a surface with parallel $JTM$. So part (3) of Theorem \ref{reduccionreal} implies that $\Hol^{0}_p(M \subset N,\nabla^{\bot})$ is trivial and the claim follows.\\

Then the theorem follows from Lemma \ref{Helgason} taking into account that $\Hol^{0}_p(M \subset N,\nabla^{\bot})$ is a compact Lie group due to Olmos' holonomy theorem \cite{Ol90}.
\end{proof}

As an immediate consequence we have the following corollary.

\begin{cor} The normal holonomy group $\Hol^{0}_p(M,\nabla^{\bot})$ is a product $K \times SO(W)$ where $K \lhd \Hol^{0}_p(M \subset N,\nabla^{\bot}) $ is a compact normal subgroup containing the center of $\Hol^{0}_p(M \subset N,\nabla^{\bot})$. More precisely,  the normal subbundle $\nu_N M$ splits as \[ \nu_N M = \nu_1 \oplus \nu_2 \, \, , \] and the normal holonomy group $\Hol^{0}_p(M \subset N,\nabla^{\bot}) $ is a product \[ \Hol^{0}_p(M \subset N,\nabla^{\bot}) = K \times \rho(SO(W)) \, \, ,\] where $K$ acts on $\nu_1$ and $\rho(SO(W))$ acts on $\nu_2$.
The normal bundle $\nu_p M$ splits as \[\nu_p(M) = \nu_1 \oplus \nu_2 \oplus W \oplus W^{\bot}\]
and $\Hol^{0}_p(M,\nabla^{\bot}) = K \times SO(W) $ acts as $K \times \rho(SO(W)) \times SO(W) \times {\mathbf{1}}$.
\end{cor}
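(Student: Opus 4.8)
The plan is to exploit the two surjective block homomorphisms coming from the parallel splitting of Theorem~\ref{reduccionreal} and to feed them into Goursat's lemma. Write $G:=\Hol^0_p(M,\nabla^{\bot})$ and $H:=\Hol^0_p(M\subset N,\nabla^{\bot})$. By Theorem~\ref{reduccionreal} the normal space decomposes as the $\nabla^{\bot}$-parallel orthogonal sum $\nu_pM=\nu_NM\oplus W\oplus W^{\bot}$, every element of $G$ has the block form $\mathrm{diag}(A,B,1)$, the assignment $\phi=B\colon G\to SO(W)$ is onto, and $G$ acts trivially on $W^{\bot}$. Restricting parallel transport to the parallel subbundle $\nu_NM$ gives a second homomorphism $A\colon G\to H$; it is onto because $N$ is totally geodesic, so the normal connection of $M$ in $N$ is the restriction to $\nu_NM$ of the normal connection of $M$ in $\mathbb{S}_c^{m}$, and hence every holonomy transformation of $M\subset N$ is realized. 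Since an element of $G$ is determined by its blocks and acts trivially on $W^{\bot}$, the combined map $(A,B)\colon G\hookrightarrow H\times SO(W)$ is injective, with both projections onto. By Theorem~\ref{thm:compactness}, $G$ is compact.

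Next I would record the structure of the two factors. By item~(1) of Theorem~\ref{reduccionreal} together with Olmos' Theorem \cite{Ol90}, $H$ acts on $\nu_NM$ as the holonomy representation of a symmetric space: there is an orthogonal splitting $\nu_NM=\nu_0\oplus V_1\oplus\cdots\oplus V_r$ on whose flat summand $\nu_0$ the group $H$ acts trivially, and $H=\Phi_1\times\cdots\times\Phi_r$ with each $\Phi_i$ a compact simple group acting irreducibly on $V_i$ as an s-representation. Set $K:=\ker\phi$; since $A$ is injective on $K$ (an element with $A=B=1$ is the identity), $K$ is isomorphic to $N_1:=A(K)=\psi(\ker\phi)$ of the proof of Theorem~\ref{thm:compactness}. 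By Claims~1 and~2 there and Lemma~\ref{Helgason}, $N_1$ is a closed normal subgroup of $H$ containing the center $Z(H)$.

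The heart of the argument is to split $H$ along $N_1$. Because $H$ is a product of simple factors and $N_1\lhd H$ is closed and contains $Z(H)$, the subgroup $N_1$ is, up to identity components, a sub-product of the $\Phi_i$; it therefore admits a normal complement $H_2$, so that $H=K\times H_2$ as an internal direct product, inducing $\nu_NM=\nu_1\oplus\nu_2$ with $K$ acting on $\nu_1$ and trivially on $\nu_2$, and $H_2$ acting on $\nu_2$ and trivially on $\nu_1$ (the flat summand $\nu_0$ being absorbed into $\nu_1$). The morphism $\rho$ of Theorem~\ref{thm:compactness}, namely the map $SO(W)\cong G/K\to H/N_1\cong H_2$ induced by $A$, is onto because $A$ is, so $H_2=\rho(SO(W))$; as $SO(W)$ is simple for $\dim W\ge 3$ this is consistent with $H_2$ being a single factor $\Phi_i$ or trivial. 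The cases $\dim W\in\{1,2\}$ are disposed of exactly as in the proof of Theorem~\ref{thm:compactness}: a curve, or a surface with parallel $J(TM)$ where $H$ is trivial by part~(3) of Theorem~\ref{reduccionreal}.

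Finally I would assemble the global product via the Goursat graph. Writing each $h\in H$ as $(k,h_2)\in K\times H_2$, the injected image $(A,B)(G)\subset K\times H_2\times SO(W)$ is cut out by the single relation $h_2=\rho(s)$, whence $G=\{(k,\rho(s),s):k\in K,\ s\in SO(W)\}=K\times\widetilde S$, where $\widetilde S=\{(1,\rho(s),s):s\in SO(W)\}\cong SO(W)$ commutes with $K$ and meets it trivially. This is the asserted decomposition $G=K\times SO(W)$: on $\nu_pM=\nu_1\oplus\nu_2\oplus W\oplus W^{\bot}$ the factor $K$ acts on $\nu_1$, and the factor $SO(W)$ acts diagonally, as $\rho(SO(W))$ on $\nu_2$ and as the standard $SO(W)$ on $W$, both acting trivially on the remaining summands and on $W^{\bot}$, i.e. $G$ acts as $K\times\rho(SO(W))\times SO(W)\times\mathbf{1}$. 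The main obstacle is the splitting $H=K\times H_2$: one must check that the closed, central-containing normal subgroup $N_1$ genuinely has a normal complement as an internal direct factor—handling the connectedness of $N_1$ and the placement of $Z(H)$ and of the flat summand—since only then does the Goursat bookkeeping yield an honest internal product $K\times SO(W)$ rather than a mere almost-direct product.
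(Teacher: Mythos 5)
Your overall route is the one the paper intends: the paper offers no separate proof of this corollary, treating it as immediate from Theorem \ref{thm:compactness} and the objects built in its proof (the block surjection $\phi$, the injection $\psi$ of $\mathrm{Ker}(\phi)$ into $\Hol^{0}_p(M\subset N,\nabla^{\bot})$, Claims 1--2, the morphism $\rho$, and Lemma \ref{Helgason}), and your Goursat bookkeeping with the two surjections $A$ and $B=\phi$ is a faithful elaboration of exactly that machinery. Your justification that $A$ is onto (the normal connection of $M\subset N$ is the restriction of $\nabla^{\bot}$ to the parallel subbundle $\nu_N M$ because $N$ is totally geodesic) is correct, and the final assembly $G=K\times\widetilde S$ is fine \emph{once} the internal splitting $H=K\times H_2$ is in hand.

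That splitting, however, is a genuine gap, and the reason you give for it is not valid. You assert that ``$H$ is a product of simple factors,'' but Olmos' theorem only gives $H=\Phi_1\times\cdots\times\Phi_r$ with each $\Phi_i$ the isotropy group of an irreducible symmetric space, and such groups are in general not simple (e.g. $U(n)$, $SO(p)\times SO(q)$, $Sp(n)\cdot Sp(1)$); moreover a closed normal subgroup of a product of compact connected groups that contains the center need not be a sub-product admitting a complement with \emph{trivial} intersection (in $SU(2)\times SU(2)$ the subgroup generated by $SU(2)\times\{1\}$ and $(-1,-1)$ is closed, normal, contains the center, and has no complementary direct factor). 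What is actually available is the Lie-algebra statement: since $\mathfrak{h}$ is reductive and $H/N_1\cong SO(W)/N_2$ is trivial or (for $\dim W=3$ or $\dim W\geq 5$) simple, one gets $\mathfrak{h}=\mathfrak{n}_1\oplus\mathfrak{q}$ with $\mathfrak{q}\cong\mathfrak{so}(W)$ and $\mathfrak{z}(\mathfrak{h})\subset\mathfrak{n}_1$; but passing from this to an honest direct product of groups still requires ruling out a finite intersection $N_1\cap Q$ (equivalently, showing the extension $1\to K\to G\to SO(W)\to 1$ is not merely an almost-direct product), and the case $\dim W=4$, where $SO(W)$ is not simple, is not covered by your argument at all. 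You do flag this step as ``the main obstacle,'' which is the right diagnosis, but the proof as written asserts it on false grounds and then leaves it open, so the argument is incomplete precisely at the point where the corollary says more than the short exact sequence of Theorem \ref{thm:compactness}.
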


\begin{cor}
Let $M$ be a totally real submanifold of a complex space form $\mathbb{S}_c^{m}$, contained in a totally real totally geodesic submanifold $N$ of $\mathbb{S}_c^{m}$. The normal holonomy group $\Hol_p(M,\nabla^{\bot})$ acts as the holonomy representation of a Riemannian symmetric space if and only if the representation $\rho: SO(W) \to SO(\nu_2)$ is trivial. Moreover, if the codimension of $M$ in $N$ is smaller than $\mathrm{dim}(M)$, then $\Hol_p(M,\nabla^{\bot})$ acts as the holonomy representation of a symmetric space i.e. the representation $\rho$ is trivial.
\end{cor}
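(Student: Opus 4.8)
The plan is to read off the action of the whole normal holonomy group from the preceding corollary and then confront it with the structural description of a holonomy representation of a symmetric space. By that corollary we have the orthogonal splitting $\nu_p(M)=\nu_1\oplus\nu_2\oplus W\oplus W^{\perp}$, and $\Hol^0_p(M,\nabla^{\perp})=K\times SO(W)$ acts as $K\times\rho(SO(W))\times SO(W)\times\mathbf{1}$; in particular the factor $SO(W)$ acts standardly (hence irreducibly) on $W$, through $\rho$ on $\nu_2$, and trivially on $\nu_1$ and on $W^{\perp}$, while $K$ acts on $\nu_1$ as the corresponding factor of the symmetric-space action of $\Hol^0_p(M\subset N,\nabla^{\perp})$ on $\nu_N M$ and trivially elsewhere.

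First I would settle the equivalence. If $\rho$ is trivial, then $\Hol^0_p(M\subset N,\nabla^{\perp})=K$ acts on $\nu_N M=\nu_1\oplus\nu_2$ as a flat factor plus an $s$-representation by item (1) of Theorem \ref{reduccionreal} (Olmos' Theorem \cite{Ol90}), the factor $SO(W)$ acts on $W$ as the isotropy representation of a round sphere, and $W^{\perp}$ is flat; hence the action is a holonomy representation of a symmetric space. Conversely, assume $\rho$ is nontrivial. Since $\dim M\geq 2$ and $J(TM)\subseteq W$, this forces $\dim W\geq 3$ — otherwise $\dim W=\dim M=2$, so $W=J(TM)$, $M$ is totally geodesic by item (3) of Theorem \ref{reduccionreal}, and $\rho$ would be trivial — so $\mathfrak{so}(W)$ is semisimple. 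Suppose, for a contradiction, that $\Hol^0_p(M,\nabla^{\perp})$ acts as a holonomy representation of a symmetric space. Then its Lie algebra splits into ideals $\mathfrak{g}_1\oplus\cdots\oplus\mathfrak{g}_k$ and the non-flat part of the normal space into irreducible summands $V_1,\dots,V_k$ with each $\mathfrak{g}_i$ acting nontrivially only on $V_i$. As $SO(W)$ acts irreducibly on $W$ while all other factors act trivially there, $W$ is one of the $V_i$. Since $d\rho\neq 0$, some simple ideal $\mathfrak{s}\subseteq\mathfrak{so}(W)$ acts nontrivially on $\nu_2$; but $\mathfrak{s}$ also acts nontrivially on $W$, because the standard representation of $\mathfrak{so}(W)$ is faithful and (when $\dim W=4$) both simple ideals act nontrivially on $W$. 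Thus the single simple ideal $\mathfrak{s}$ acts nontrivially on the two distinct summands $W$ and $\nu_2$, contradicting the structure above. Hence the action is a holonomy representation of a symmetric space precisely when $\rho$ is trivial.

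It remains to prove the last assertion by a dimension estimate. Because $J$ is injective and $J(TM)\subseteq W$ we have $\dim W\geq\dim M$, while $\nu_2\subseteq\nu_N M$ gives $\dim\nu_2\leq\mathrm{codim}_N M$; so the hypothesis $\mathrm{codim}_N M<\dim M$ yields $\dim\nu_2<\dim W$. When $\dim W\neq 4$ the smallest nontrivial representation of the group $SO(W)$ is the vector representation, of dimension $\dim W$ (spin representations are genuine only on the universal cover and do not descend to $SO(W)$), so $\dim\nu_2<\dim W$ forces $\rho$ to be trivial. The case $\dim W=4$ I would treat separately: if $\dim M=4$ then $J(TM)=W$, so $M$ is totally geodesic by item (3) of Theorem \ref{reduccionreal} and then $\Hol^0_p(M\subset N,\nabla^{\perp})$, hence $\rho$, is trivial; if $\dim M\leq 3$ then $\dim\nu_2\leq\mathrm{codim}_N M<3$, which lies below the minimal dimension $3$ of a nontrivial representation of $SO(4)$, and again $\rho$ is trivial. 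By the equivalence just proved this is exactly the condition for $\Hol_p(M,\nabla^{\perp})$ to act as a holonomy representation of a symmetric space, so the assertion follows.

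I expect the main obstacle to be the exceptional isomorphism $\mathfrak{so}(4)\cong\mathfrak{su}(2)\oplus\mathfrak{su}(2)$: it is the only place where a simple summand of $\mathfrak{so}(W)$ could carry a nontrivial representation of dimension strictly smaller than $\dim W$, so both the converse of the equivalence and the dimension estimate must be phrased so as to survive it, which is precisely why the reduction to total geodesy through item (3) of Theorem \ref{reduccionreal} is invoked in that case.
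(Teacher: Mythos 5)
Your proposal is correct and follows essentially the same route as the paper: the equivalence is read off from the product structure $K\times SO(W)$ acting as $K\times\rho(SO(W))\times SO(W)\times\mathbf{1}$ (the paper simply calls this ``a direct consequence of the previous corollary''), and the second assertion is the same dimension count $\dim\nu_2\le\mathrm{codim}_N M<\dim M\le\dim W$ forcing $\rho$ to be trivial, with the exceptional case $\mathfrak{so}(4)\cong\mathfrak{su}(2)\oplus\mathfrak{su}(2)$ handled separately via item (3) of Theorem \ref{reduccionreal}. The only cosmetic difference is that where you invoke the minimal dimension of a nontrivial representation of $SO(W)$ (with the caveat about spin representations), the paper uses the slightly cleaner observation that $SO(W)$ is simple and $\dim SO(W)>\dim SO(\nu_2)$.
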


\begin{proof}
The first part is a direct consequence of the previous corollary.\\
For the second part we only need to prove that under these hypothesis $\rho(SO(W))$ is trivial.
Observe that $\mathrm{dim}(M) > 1$ and if $\mathrm{dim}(M) = 2$ then $M$ is a surface in the three dimensional real space form $N$. So $\nu_{N}M$ is flat hence $\rho$ is trivial.\\

 Assume $\dim (M)\geq 3$. Since the codimension of $M$ is smaller than its dimension, we should have a representation $\rho:SO(W)\to SO(\nu_2)$ with $\dim W\geq \dim J(TM) > \dim (\nu_2)$. This shows that $\rho$ is trivial in case $\dim W=3$ or $\dim W\geq 5$, since $SO(W)$ is a simple Lie group and $\dim SO(W) > \dim SO(\nu_2)$.

 If $\dim W=4$, we must have $3\leq \dim(M)\leq 4$. If $\dim(M)=4$, $J(TM)=W$ and hence $M$ is totally geodesic. So $\nu_{N}M$ is flat and $\rho(SO(W))$ is trivial. If $\dim(M)=3$, then it is a hypersurface in $N$ or it has codimension equal to $2$. In the last case, we should have a representation $\rho:SO(4)\to SO(2)$ which must be again trivial.
\end{proof}

\section{Complex submanifolds: Proof of Theorem \ref{teo:normalComplejo}}

In \cite{CDO} it was proved that the normal holonomy group of a full complete complex submanifold of the projective space is either the full group $SO(\nu(M))$  or the submanifold has parallel second fundamental form.

In this section we prove Theorem \ref{teo:normalComplejo} which improves the results of \cite{AD} and complete the local classification of normal holonomies of complex submanifolds of complex space forms. \\

For the flat complex space form $\mathbb{C}^n$ the result is in \cite[Remark 2.2., page 253]{Di00}.
Namely, the existence of a flat factor for the normal holonomy action implies a reduction of codimension which is not possible since the submanifold is assumed to be full.

For the complex projective space or its non compact dual the result was proved in \cite{AD} under the stronger hypothesis that either the action of $\Hol^0_p(M,\nabla^{\perp})$ is irreducible or the second fundamental form $\alpha$ of $M$ has no nullity i.e. the index of relative nullity $\mu(p)=\dim(\mathcal{N}_p)$ of $M$ is zero, where $$\mathcal{N}_p=\cap_{\xi\in \nu_pM}\ker(A_\xi).$$
So it is enough to show that if the index of relative nullity of $M$ is non zero then Theorem \ref{teo:normalComplejo}  holds.
In this case, there exists a unitary vector $X\in \mathcal{N}_p$ and then by the Ricci equation (\ref{ricci}), one has
\[ R^{\perp}(X,JX) \xi = -\frac{c}{2} J \xi \, \, ,\]
for any $\xi\in \nu_p M$, where $c$ is the constant holomorphic sectional curvature of the non flat complex space form.
This shows that the complex structure $J$ belongs to the Lie algebra of the normal holonomy group at the point $p \in M$. Then Theorem \ref{teo:normalComplejo} follows from \cite[Theorem 24
and Proposition 9]{AD}. $\hfill\Box$

\begin{rem}\label{rem:nonPolar}Without the hypothesis of the submanifold being full Theorem \ref{teo:normalComplejo}  is not true. Indeed, the normal holonomy group of a codimension 2 totally geodesic $\mathbb{C}P^n \subset \mathbb{C}P^{n+2}$ is the diagonal action of $U(1)$ on $\mathbb{C}^2$. Such action is not even polar \cite[pag. 92, exercise 3.10.6]{BCO} hence cannot be an s-representation.
\end{rem}

\begin{rem}\label{rem:nonIrreducible} The normal holonomy action of a complex submanifold is not necessarily irreducible. Here is an example:
Let $M \subset \mathbb{C}^3$ be the cone given by the equation \[ x^2 + y^2 + z^2 = 0 \, \, , \]
Then the projectivization $Z$ of the product of cones $M \times M \subset \mathbb{C}^6$ gives a 3-dimensional algebraic variety of $\mathbb{C}P^5$ . The normal holonomy group of the smooth open subset
 $ Z_{smooth} \subset Z$ does not act irreducibly on the normal space at any point $p \in Z_{smooth}$. This is so since the normal holonomy group of $Z_{smooth}$ is the same as the normal holonomy at a smooth point of the product $M \times M \subset \mathbb{C}^6$ see \cite[Remark 5, page 211]{CDO}.
 We note that the induced Riemannian metric (from the Fubini-Study metric on $\mathbb{C}P^5$) on $Z_{smooth}$ is locally irreducible.
\end{rem}


%
%
\vspace{.1cm}

\medskip

 A.J. Di Scala is member of PRIN 2010-2011 ``Varieta' reali e complesse: geometria,
topologia e analisi armonic'' and member of GNSAGA of INdAM.

\medskip

F. Vittone was partially supported by ERASMUS MUNDUS ACTION 2 programme, through the EUROTANGO II Research Fellowship, PICT 2010-1716 Foncyt and CONICET.

The second author would like to thanks Politecnico di Torino for the hospitality during his research stay.

\medskip

\vspace{1cm}

\begin{center}
\begin{tabular}{lcl}
 Antonio. J. Di Scala,&$\qquad$ & Francisco Vittone,\\
 \footnotesize Dipartimento di Scienze Matematiche&$\qquad$ &\footnotesize Depto. de Matem\'atica, ECEN, FCEIA,\\
 \footnotesize Politecnico di Torino,&$\qquad$ &\footnotesize Universidad Nac. de Rosario - CONICET\\
\footnotesize Corso Duca degli Abruzzi, 24&$\qquad$ &\footnotesize Av. Pellegrini 250\\
\footnotesize 10129 Torino, Italy &$\qquad$ &\footnotesize 2000, Rosario, Argentina \\
\footnotesize\href{mailto:antonio.discala@polito.it}{antonio.discala@polito.it}&$\qquad$ & \footnotesize\href{mailto:vittone@fceia.unr.edu.ar}{vittone@fceia.unr.edu.ar}\\
\footnotesize\url{http://calvino.polito.it/~adiscala/} &$\qquad$ &\footnotesize\url{www.fceia.unr.edu.ar/~vittone}
\end{tabular}
\end{center}

\end{document}